\newcommand{\rr}{\mathbb{R}}
\newcommand{\cc}{\mathbb{C}}
\newcommand{\eps}{\epsilon}
\newcommand{\pl}{\partial}
\newcommand{\x}{\times}
\newcommand{\til}{\widetilde}
\newcommand{\cjd}{\rangle}
\newcommand{\cjg}{\langle}
\newcommand{\demi}{\frac{1}{2}}
\newcommand{\la}{\lambda}
\newcommand{\be}{\begin{equation}}
\newcommand{\ee}{\end{equation}}
\newcommand{\ba}{\begin{aligned}}
\newcommand{\ea}{\end{aligned}}
\newcommand{\bsp}{\begin{split}}
\newcommand{\esp}{\end{split}}
\newcommand{\bp}{{\it Proof. }}
\newcommand{\ep}{\hfill $\square$\\}
\newcommand{\N}{\mathbb N}
\newcommand{\R}{\mathbb R}
\newcommand\dist{\textup{dist}}
\newcommand\bT{\overline T}
\newcommand{\indic}{\operatorname{1\negthinspace l}}
\def\<{\langle}
\def\>{\rangle}
\newtheorem{lem}{Lemma}[section]
\newtheorem{thm}[lem]{Theorem}
\newtheorem{prop}[lem]{Proposition}
\newtheorem{remk}[lem]{Remark}
\numberwithin{equation}{section}
\numberwithin{figure}{section}
\begin{document}

\title{Spectral analysis of random walk operators on euclidian space}
\author[C. Guillarmou]{Colin~Guillarmou}
\email{cguillar@\allowbreak dma.\allowbreak ens.\allowbreak fr}

\author[L. Michel]{Laurent~Michel}
\email{lmichel@\allowbreak unice.\allowbreak fr}

\maketitle
\begin{abstract}
We study the operator associated to a random walk on $\R^d$ endowed with a probability measure. We give a precise description of the spectrum of the operator near $1$ and use it to estimate the  total variation distance between the iterated kernel and its stationary measure. Our study contains the case of Gaussian densities on $\R^d$.
\end{abstract}
\section{Introduction}

Let $\rho\in C^1(\R^d)$ be a strictly positive bounded function such that $d\mu=\rho(x)dx$ is a probability measure. 
Let $h>0$ be a small parameter and $B_h(x)$ be the ball of radius $h$ and center $x$. We consider the natural random walk associated to the density $\rho$ with step $h$:
if the walk is in $x$ at time $n$, then the position $y$ at time $n+1$ is determined by chosing $y\in\R^d$ uniformly with respect  to the measure 
\be
t_h(x,dy)=\frac {\rho(y)}{\mu(B_h(x))}\indic_{\vert x-y\vert<h}dy
\ee
The associated random-walk operator is defined by
\be
T_hf(x)=\frac 1{\mu(B_h(x))}\int_{B_h(x)}f(x')d\mu(x').
\ee
for any continuous function  $f$, and
the kernel of $T_h$ is $t_h(x,dy)$.
This is clearly a Markov kernel. Introduce the measure
 \[d\nu_h=\frac{\mu(B_h(x))\rho(x)}{Z_h}dx\] 
 where $Z_h$ is chosen so that $d\nu_h$ is a probability on $\R^d$. Then, the operator $T_h$ is self-adjoint on $L^2(M,d\nu_h)$ and the measure $d\nu_h$ is stationnary for the kernel $t_h(x,dy)$ (this means that $T_h^t(d\nu_h)=d\nu_h$, where $T_h^t$ is the transpose operator of $T_h$ acting on Borel measures).

The aim of this article is to describe the spectrum ot $T_h$ and to adress the problem of convergence of the iterated operator to the stationary measure. Such problems have been investigated in compact cases in \cite{DiaLeb}, \cite{LebMi} and \cite{DiaLebMi}, and the link between the spectrum of $T_h$ and the Laplacian (with Neumann boundary condition in \cite{DiaLeb} and \cite{DiaLebMi}) was etablished. In this paper we investigate the case of such operators on the whole Euclidian space. The main difference with the previous works comes from the lack of compactness due to the fact that $\R^d$ is unbounded.
We will make the following assumptions on $\rho$:\\ 

\noindent\textbf{Case 1: tempered density}. A density $\rho\in C^1(\rr^d)$ 
is said \emph{tempered} if there exists a constant $C>0$ such that for all $x\in \rr^d$
\be\label{boundderiv}
|d \rho(x)|\leq C\rho(x)
\ee
We shall say that it is \emph{smooth tempered of exponential type} if $\rho$ is smooth and
if there are some positive numbers $ (C_\alpha)_{\alpha\in \N^d} $, $R>0, \kappa_0>0$, such that
\be\label{hyp1}
\forall \vert x\vert\geq R,\;\vert\partial_x^\alpha\rho(x)\vert\leq C_\alpha \rho(x)
\ee
and, if $\Delta:=-\sum_{i=1}^d\pl_{x_i}^2$ is the positive Laplacian,
\be\label{hyp2}
\forall \vert x\vert\geq R,\;-\Delta\rho(x)\geq \kappa_0\rho(x).
\ee
Densities verifying these assumptions can be easily  constructed. For instance, if $\rho$ is a smooth non vanishing function such that 
there exists $\alpha,\beta>0$ such that for any $\vert x\vert>R$ we have 
$\rho(x)=\beta e^{-\alpha\vert x\vert}$, then the above assumptions are satisfied with $\kappa_0=\alpha^2$. 
For  densities satisfying \eqref{hyp1}, \eqref{hyp2}, we will define
\be\label{eq:def_kappa}
\kappa=\lim_{R\to \infty}\inf_{|x|\geq R}\frac{-\Delta\rho(x)}{\rho(x)}.
\ee 

The second type of densities we shall consider is the following\\

\noindent{\bf Case 2: Gaussian density}. We assume that $\rho(x)=\beta e^{-\alpha |x|^2}$ for some $\alpha,\beta>0$ such that 
$\int_{\R^d}\rho(x)dx=1$.\\

It can be shown that  that if  $\rho$ satisfies \eqref{boundderiv} or is Gaussian, 
there exists a constant $C>0$ and $h_0>0$ such that 
\be\label{hyp0}
\forall x\in\R^d,\,\forall h\in]0,h_0],\;\mu(B_h(x))\geq Ch^d\rho(x).
\ee
Let us set $m_h(x)=\mu(B_h(x))$ and define the functions
\be\label{eq:def_ah}
a_h(x):=(\alpha_dh^{d}\rho(x)/m_h(x))^{1/2},\quad 
G_d(\xi)=\frac 1{\alpha_d}\int_{\vert z\vert \leq 1}e^{iz\xi}dz
\ee 
where $\alpha_d:={\rm Vol}(B_{\R^d}(0,1))$. Notice that $G_d$ is a real valued function bounded above by $1$ and below by 
some $M>-1$, then define 
\be\label{limsup}
A_h:=\lim_{R\to \infty}\sup_{|x|\geq R}a_h^2(x), \quad M:=\min_{\xi\in \rr^n} G_d(\xi)>-1. 
\ee 
We will show that $A_h=1-\frac{\kappa}{2(d+2)}h^2+O(h^4)$ with 
$\kappa$ defined in \eqref{eq:def_kappa}.

In order to describe the eigenvalues of $T_h$, let us also introduce the operator 
\be\label{eq:def_Lrho}
L_\rho=\Delta+V(x)
\ee
with $V(x):=\frac{-\Delta\rho(x)}{\rho(x)}$. 
Observe that the essential spectrum of this operator is $[\kappa,+\infty[$. Moreover, we have the following factorisation:
\be
L_\rho=\sum_{j=1}^d\ell_j^*\ell_j
\ee
where $\ell_j=-\partial_{x_j}+\frac{\partial_{x_j}\rho}{\rho}$.
This shows that $L_\rho$ is non-negative on $L^2(\R^d)$. Moreover, since $\ell_ju=0$ iff $u$ is proportional to $\rho$, then  $0$ is a simple eigenvalue associated to the eigenfunction $\rho\in L^1\cap L^\infty\subset L^2$.

We first prove the following result in the tempered case
\begin{thm}\label{th:analyse_spec_temp}
Suppose that $\rho$ is tempered in the sense of \eqref{boundderiv}, then:\\ 
(i) the essential spectrum of $T_h$ on $L^2(\rr^d,d\nu_h)$
is contained in $[A_hM, A_h]$ where $M$ and $A_h$ are defined in \eqref{limsup}. 
If in addition $A_h=\lim_{|x|\to \infty}a^2_h(x)$, then 
$\sigma_{\rm ess}(T_h)=[A_hM, A_h]$.\\
\noindent (ii) If \eqref{hyp1} and \eqref{hyp2} hold,  
then $A_h=1-\frac{\kappa}{2(d+2)}h^2+O(h^4)$ with $\kappa$ defined in \eqref{eq:def_kappa}, and 
for all $\alpha\in]0,1[$ there exist $C>0,h_0>0$ such that, if  $0=\mu_0<\mu_1\leq \mu_2\leq\ldots\leq\mu_k$ denote 
the $L^2(\rr^d,dx)$ eigenvalues  of $L_\rho$ in $[0,\alpha\kappa]$ counted with multiplicities, 
and if $1=\la_0(h)>\la_1(h)\geq \ldots\geq\la_k(h)$ denote the $k$ largest  
eigenvalues of $T_h$ on $L^2(\rr^n,d\nu_h)$ counted with multiplicities, 
then for all 
$h\in]0,h_0]$ and any $j=1,\ldots,k$,
\[
\Big\vert 1-\frac{1}{2(d+2)}\mu_kh^2-\lambda_k(h)\Big\vert\leq Ch^4.
\]
\end{thm}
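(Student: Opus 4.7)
The first move is to unitarily conjugate $T_h$ on $L^2(\R^d, d\nu_h)$ to an operator on the flat $L^2(\R^d, dx)$ via the multiplication $U : f \mapsto (m_h(x)\rho(x)/Z_h)^{1/2} f$. A direct computation yields
\[
\tilde T_h := U T_h U^{-1} = M_{a_h} K_h M_{a_h},
\]
where $M_{a_h}$ is multiplication by $a_h$ and $K_h$ is convolution with $(\alpha_d h^d)^{-1}\indic_{B_h(0)}$; in Fourier variables $K_h = G_d(hD)$, so $\|K_h\|_{L^2\to L^2} = 1$ and $\sigma(K_h) = [M,1]$. The spectral problem for $T_h$ on $L^2(d\nu_h)$ is thus equivalent to that for $\tilde T_h$ on $L^2(dx)$.

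\textbf{Part (i).} I would use Weyl's singular sequence characterization of $\sigma_{\rm ess}$. If $\lambda \in \sigma_{\rm ess}(\tilde T_h)$ and $(\psi_n)$ is an associated Weyl sequence, then $\psi_n$ may be taken, up to cutoffs, to be supported in $\{|x|\geq R_n\}$ with $R_n\to \infty$; for any $\varepsilon>0$, eventually $\|a_h\psi_n\|^2 \leq (A_h+\varepsilon)$, so the identity $\langle \tilde T_h \psi_n,\psi_n\rangle = \langle K_h(a_h\psi_n),a_h\psi_n\rangle$ combined with $M\cdot I \leq K_h \leq I$ gives $\lambda \in [A_hM,A_h]$. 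For the reverse inclusion under the extra assumption, one produces explicit Weyl sequences $\ell_n^{-d/2}e^{ix\cdot \xi_0/h}\chi((x-x_n)/\ell_n)$ with $|x_n|\to \infty$, $1\ll \ell_n \ll |x_n|$, and $\xi_0$ chosen so that $G_d(\xi_0)=\lambda/A_h$; since $a_h\to A_h^{1/2}$ along this sequence and $K_h$ acts on these concentrated wave packets as multiplication by $G_d(\xi_0)$ to leading order, each $\lambda \in [A_hM,A_h]$ is realized.

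\textbf{Part (ii).} A fourth-order Taylor expansion of the integrand in $\int_{B_h(x)} f(y)\rho(y)\,dy$, using $\int_{|z|<h} z_i\,dz = 0$ and $\int_{|z|<h} z_iz_j\,dz = \delta_{ij}\alpha_d h^{d+2}/(d+2)$, gives first, taking $f=1$,
\[
m_h(x) = \alpha_d h^d \rho(x) - \frac{\alpha_d h^{d+2}}{2(d+2)}\Delta\rho(x) + O(h^{d+4}\rho(x)),
\]
where \eqref{hyp1} controls the remainder uniformly. This yields $a_h^2(x) = 1 - \frac{h^2 V(x)}{2(d+2)} + O(h^4)$ and, after taking $\sup_{|x|\geq R}$ and then $R\to \infty$, $A_h = 1 - \frac{h^2\kappa}{2(d+2)} + O(h^4)$. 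The analogous expansion for $\tilde T_h f = a_h K_h(a_h f)$ produces
\[
\tilde T_h f = f - \frac{h^2}{2(d+2)} L_\rho f + h^4 R_h f
\]
for smooth, sufficiently decaying $f$, with $R_h$ bounded in an appropriate weighted norm. Since the $\mu_j \in [0,\alpha\kappa]$ lie below the essential spectrum $[\kappa,\infty)$ of $L_\rho$, the eigenfunctions $u_j$ are smooth with Agmon exponential decay; feeding them as trial functions into the min-max principle for the self-adjoint $\tilde T_h$ gives $\lambda_j(h) \geq 1 - \frac{h^2\mu_j}{2(d+2)} - Ch^4$. For the matching upper bound one exploits the spectral gap between $\lambda_j(h)$ and the top $A_h$ of the essential spectrum to prove Agmon decay for the eigenfunctions $v_j(h)$ of $\tilde T_h$, and then uses them as trial functions for $L_\rho$ via $\langle L_\rho v_j,v_j\rangle = \frac{2(d+2)}{h^2}(1-\lambda_j(h)) + O(h^2)$.

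\textbf{Main obstacle.} The principal difficulty, which is absent in compact situations, is establishing the Agmon-type exponential decay of the eigenfunctions of $\tilde T_h$ whose eigenvalues sit in the spectral gap $(A_h,1]$. This decay is needed both to localize the min-max comparison on the non-compact $\R^d$ and to justify the pointwise Taylor expansion of $\tilde T_h$ applied to these eigenfunctions with a $O(h^4)$ error controlled in $L^2$. I expect that it can be obtained by combining the semiclassical structure $\tilde T_h \approx I - \frac{h^2}{2(d+2)}L_\rho$ with an energy estimate exploiting that, after conjugation by a suitable weight $e^{\varphi/h}$, one has $A_h - \tilde T_h \geq c(1-\alpha)h^2$ outside a large ball.
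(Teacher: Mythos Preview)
Your overall architecture matches the paper's: conjugate to $\tilde T_h = a_h G_d(hD) a_h$ on $L^2(dx)$, locate the essential spectrum via the behaviour of $a_h$ at infinity, then compare $\tilde T_h$ with $1-\frac{h^2}{2(d+2)}L_\rho$ for the discrete spectrum. The expansion of $m_h$ and the computation of $A_h$ are exactly what the paper does.

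There is, however, a genuine gap in Part~(ii), and it is broader than the one you flag. To run the reverse comparison $\langle L_\rho v_j, v_j\rangle = \frac{2(d+2)}{h^2}(1-\lambda_j(h)) + O(h^2)$ you need not only spatial decay of $v_j$ but also \emph{regularity}: the expansion $\tilde T_h f = f - \frac{h^2}{2(d+2)} L_\rho f + h^4 R_h f$ has an $R_h$ whose $L^2$ bound requires $f\in H^4$ (the fourth-order Taylor remainder), and even to make sense of $L_\rho v_j$ you need $v_j\in H^2$. A priori $v_j$ is only in $L^2$, and nothing in your outline produces Sobolev regularity. The paper addresses this with a separate lemma (Lemma~3.2) using that $\tilde T_h$ is a semiclassical pseudodifferential operator of negative order: from $\tilde T_h e_h = \lambda_h e_h$ with $\lambda_h$ bounded away from $0$ one bootstraps $e_h\in H^s$ for all $s$, and a parametrix for $\lambda_h - a_h^2 G_d(\xi)$ on $\{|\xi|\geq c\}$ gives the frequency localisation $\|(1-\chi)(h^2\Delta) e_h\|_{H^s}=O(h^\infty)$, which in turn yields uniform $H^s$ bounds. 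Your Agmon scheme, even if it worked, would only give decay, not this regularity.

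On the decay itself, the paper's route is also different from what you suggest. Rather than conjugating by $e^{\varphi/h}$ (which is delicate for the nonlocal operator $\tilde T_h$), it observes that after the frequency localisation above one has $(\Delta \tilde F(h^2\Delta) + V(x) - \tilde z_h)\phi(h^2\Delta)e_h = O(h)$ with $\tilde z_h<\kappa$; since $V(x)\geq \kappa$ for $|x|\geq R$, the symbol $|\xi|^2\tilde F(h\xi)+V(x)-\tilde z_h$ is elliptic on $\{|x|\geq R\}$, and a semiclassical parametrix there gives $\|\chi_R e_h\|_{L^2}=O(h)$. This decay plus the uniform $H^s$ bound on a fixed ball is what yields $H^2$-compactness of the family $(e_h)$ and hence the multiplicity matching. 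Your min-max formulation would also work once regularity and decay are in hand, but neither is established in your proposal, and the method you sketch for decay does not obviously apply to averaging operators with step $h$.

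For Part~(i), your Weyl-sequence argument is a legitimate alternative to the paper's compact-perturbation decomposition $\tilde T_h = \indic_{|x|>R}\tilde T_h\indic_{|x|>R} + (\text{Hilbert--Schmidt})$, but the step ``$\psi_n$ may be taken supported in $\{|x|\geq R_n\}$'' is exactly the assertion that the bounded-region piece is compact, which you should justify (here it is immediate since the kernel is bounded with bounded support in $(x,y)$).
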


Observe that if $\rho$ is only tempered, the statement (i) shows that the essential spectrum can be the whole interval $[M,1]$:
for instance, take a density $\rho$ such that $\rho(x)=|x|^{-m}$ in $\{|x|>R\}$ for some $R>0$ and $m>d$, then
it is easy to check that $m_h(x)/\rho(x)\to 1$ as $|x|\to \infty$ and therefore $A_h=1$ in this case.
 
Notice also that there are examples of smooth densities of exponential type $\rho$ such that the discrete spectrum 
of $L_\rho$ below its continuous spectrum is non-empty. Indeed, take for instance 
$\rho=e^{-\tau \alpha(x)}$ where $\tau>0$ and $\alpha(x)$ is smooth, equal to $\vert x\vert$
for $|x|>1$ and $\alpha(0)=0$, then 
\[P_\tau:=\tau^{-2}L_\rho=\tau^{-2}\Delta +|\nabla\alpha|^2+\tau^{-1}\Delta \alpha\]
is a $\tau^{-1}$ semi-classical elliptic differential operator with semi-classical principal symbol
$p(x,\xi)=|\xi|^2+|\nabla \alpha|^2$ (see \cite{Ma,EvZw,DiSj99} for the theory of 
semi-classical pseudodifferential operators). 
Since $|\nabla\alpha|=1$ in $|x|>1$ and $\Delta\alpha=0$ in $|x|>1$, the essential spectrum of $P_\tau$
is $[1,\infty)$, then we can apply Theorem 9.6 of \cite{DiSj99} and the fact that ${\rm Vol}\{(x,\xi)\in \rr^{2d}; p(x,\xi)\in [0,\demi]\}>0$ (since $\alpha(0)=0$)
to conclude that, if $\tau>0$ is large enough, 
there exist $C\tau^{d}$ eigenvalues of $P_\tau$ in $[0,\demi]$  for some $C>0$.

We also emphasize that the result in Theorem \ref{th:analyse_spec_temp} 
is used in a fundamental way in the recent paper \cite{CGM} to analyze random walks
on surface with hyperbolic cusps.\\

If instead $\rho$ is Gaussian, then $L_\rho=\Delta+4\alpha^2|x|^2-2d\alpha$ and its spectrum is discrete 
$\sigma(L_\rho)=4\alpha\N$ and the  eigenfunctions  associated to $4\alpha k$ 
have the form $H_{k}(x)e^{-2\alpha\vert x\vert^2}$ for some explicit polynomial $H_{k}$.
We then have 
\begin{thm}\label{th:analyse_spec_gauss}
Suppose that $\rho$ is Gaussian, then the operator $T_h$ is compact and  
if $0=\mu_0<\mu_1\leq \mu_2\leq\ldots\leq\mu_k\ldots$ denote the $L^2(\rr^d,dx)$ eigenvalues of $L_\rho$ and 
$1=\la_0(h)>\la_1(h)\geq \dots \la_{k}(h)\geq \dots $ those of $T_h$,  
then for $K\geq 0$ fixed, there exists $C>0$ and $h_0>0$ such that for all $h\in]0,h_0]$ and any $k=1,\ldots,K$,
\be\label{eq:th:loc_vp}
\Big\vert 1-\frac{1}{2(d+2)}\mu_k h^2-\lambda_k(h)\Big\vert \leq Ch^4.
\ee
Moreover, there exists $\delta_0>0$ such that for any $\lambda\in [0,\delta_0]$, the number $N(\lambda,h)$ of eigenvalues of $T_h$ in $[1-\lambda,1]$ satisfies
\be\label{eq:weyl}
N(\lambda,h)\leq C(1+\lambda h^{-2})^{d }.
\ee
\end{thm}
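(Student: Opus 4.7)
The plan is to adapt the semiclassical strategy of Theorem~\ref{th:analyse_spec_temp} to the Gaussian setting, using the rapid decay of $\rho$ to obtain both compactness and the Weyl upper bound.

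\emph{Compactness.} I would conjugate $T_h$ by the unitary $U:L^2(d\nu_h)\to L^2(\rr^d,dx)$, $Uf=(m_h\rho/Z_h)^{1/2}f$, to obtain an integral operator $\tilde T_h=UT_hU^{-1}$ with symmetric kernel
\[ k_h(x,y)=\sqrt{\tfrac{\rho(x)\rho(y)}{m_h(x)m_h(y)}}\,\indic_{|x-y|<h}. \]
Writing $m_h(x)=\rho(x)\int_{B_h(0)}e^{-2\alpha x\cdot z-\alpha|z|^2}dz$ and applying Laplace's method gives $\rho(x)/m_h(x)\lesssim |x|^{(d-1)/2}e^{-2\alpha h|x|}$ as $|x|\to\infty$. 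This pointwise decay is enough to ensure $\iint|k_h(x,y)|^2\,dx\,dy<\infty$, so $T_h$ is Hilbert--Schmidt, in particular compact.

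\emph{Localization of low eigenvalues.} A fourth-order Taylor expansion of $\rho$ and $f$ inside $B_h(x)$, using that odd moments of the ball vanish, yields
\[ T_hf=f-\frac{h^2}{2(d+2)}\bigl(\Delta f-2\tfrac{\nabla\rho}{\rho}\cdot\nabla f\bigr)+O(h^4\|f\|_{C^4}). \]
With $\Delta=-\sum\partial_i^2$, the multiplication $f\mapsto\rho f$ intertwines the bracketed second-order operator with $L_\rho$, since a direct computation gives $L_\rho(\rho f)=\rho[\Delta f-2(\nabla\rho/\rho)\cdot\nabla f]$. The eigenfunctions $\phi_k=H_k(x)e^{-2\alpha|x|^2}$ of $L_\rho$ are smooth with Gaussian decay, so the functions $\rho^{-1}\phi_k$ have uniformly controlled $C^4$-norms and produce quasimodes of $T_h$ near $1-\mu_kh^2/(2(d+2))$ with error $O(h^4)$. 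Combining this with the min--max principle and the converse estimate (any $T_h$-eigenvector with eigenvalue $\geq 1-\delta$ is, after conjugation, $O(h^2)$-close to $\mathrm{span}(\phi_0,\dots,\phi_k)$) gives \eqref{eq:th:loc_vp} for each fixed $K$.

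\emph{Weyl upper bound.} For the uniform count I would pass to the Dirichlet form
\[ \langle(I-T_h)u,u\rangle_{L^2(d\nu_h)}=\frac{1}{2Z_h}\iint_{|x-y|<h}(u(y)-u(x))^2\rho(x)\rho(y)\,dx\,dy, \]
and apply a local Poincar\'e inequality on balls of radius $h$ to bound this below by $ch^2$ times the quadratic form of $L_\rho$ (applied to $\rho u$), up to a manageable error. By min--max, counting eigenvalues of $T_h$ in $[1-\lambda,1]$ then reduces to counting those of $L_\rho$ in $[0,C\lambda h^{-2}]$, and the explicit Weyl law $N_{L_\rho}(E)\leq CE^d$ for the harmonic oscillator $L_\rho=\Delta+4\alpha^2|x|^2-2d\alpha$ yields \eqref{eq:weyl}. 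The main obstacle is to make this Poincar\'e-form comparison uniform in $x$: since $\nabla\ln\rho=-2\alpha x$ grows at infinity while $h$ is small, one must split $\rr^d$ into a compact region where a classical Poincar\'e inequality on a ball of radius $h$ applies, and a far region where the exponential decay of $\rho$ dominates any drift-induced error. This last step uses in an essential way the Gaussian structure rather than merely exponential decay of $\rho$.
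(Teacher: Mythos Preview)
Your compactness argument is correct and in fact stronger than what the paper does (the paper only shows compactness via $a_h(x)\to 0$ combined with $G_d(\xi)\to 0$, whereas you show Hilbert--Schmidt). There are, however, two real problems in the rest of the proposal.

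\textbf{Eigenvalue localisation.} Your quasimode construction gives one inequality: for each $\mu_k$ there is an eigenvalue of $T_h$ within $O(h^4)$ of $1-\gamma_d\mu_k h^2$. But the statement \eqref{eq:th:loc_vp} is a two--sided matching with multiplicities, and for that you need the converse: any $T_h$--eigenfunction with eigenvalue in $[1-\delta,1]$ must be close to the corresponding $L_\rho$--eigenspace. You state this in parentheses as if it were immediate, but it is the substantial part of the argument. A priori an eigenfunction $e_h$ of $\tilde T_h$ is only in $L^2$; to run the Taylor expansion \eqref{eq:dvt_taylor_Th} on it you need uniform $H^s$ bounds and weighted decay $\langle x\rangle^k e_h\in L^2$, and then a compactness argument to extract a limit in $H^2$. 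In the paper this takes all of Section~3 (semiclassical parametrices to get frequency localisation, then an iteration using the structure of $a_h^{-2}$ to get spatial decay). Without this machinery the min--max argument only yields $\lambda_k(h)\ge 1-\gamma_d\mu_k h^2-Ch^4$, not the matching inequality.

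\textbf{Weyl estimate.} The proposed Poincar\'e comparison is in the wrong direction. A local Poincar\'e inequality on balls of radius $h$ gives an \emph{upper} bound on the discrete Dirichlet form by the continuous one, not the lower bound you need. Concretely, the inequality
\[
\frac{1}{2Z_h}\iint_{|x-y|<h}(u(x)-u(y))^2\rho(x)\rho(y)\,dx\,dy \;\ge\; c\,h^2\int|\nabla u|^2\rho^2\,dx
\]
is false: take $u(x)=\sin(Nx_1)$ with $Nh\gg 1$; the left side is of order $h^d$ (the values $u(x),u(y)$ decorrelate) while the right side is of order $N^2h^{d+2}\gg h^d$. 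The discrete form simply cannot see frequencies above $h^{-1}$, so it cannot dominate $\langle L_\rho v,v\rangle$ globally. The paper's proof is genuinely different: it builds an auxiliary bounded operator $P_\tau=\tau\bigl(\chi^2(\sqrt{\Delta/\tau})+\chi^2(\sqrt{|x|^2/\tau})\bigr)$, shows via direct symbol estimates that $\langle \tilde T_h f,f\rangle\le (1-C\tau h^2)\|f\|^2$ on the range of $1-\indic_{[0,\tau/2]}(P_\tau)$, and then bounds the rank of that spectral projector by a harmonic-oscillator comparison. The point is that the gain near $1$ comes from \emph{two independent sources}, the spatial decay of $a_h$ and the frequency decay of $G_d$, and one must combine them additively rather than through a single gradient form.
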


In the last section of this paper, we also give some consequences on the convergence of the kernel of $T_h^n$ to the stationary measure $d\nu_h$ as $n\to\infty$.  In particular we show that, contrary to the compact setting \cite{LebMi}, the convergence in $L^\infty$ norm fails, 
essentially due to the non-compactness of the space.\\ 

These theorems, will be proved by using microlocal analysis. We refer to the books, \cite{DiSj99}, \cite{EvZw} and \cite{Ma} for standard results in this theory. The organisation of the paper is the following.
In the next section we study  the essential spectrum of $T_h$ on $L^2(\rr^d,d\nu_h)$. In section 3, we collect some a priori estimates (regularity and decay) on the eigenfunctions of $T_h$. Following the strategy of \cite{LebMi}, we use these estimates in section 4 to prove the above theorems.
In last section, we adress the problem of total variation estimates: we show that the convergence to stationarity can not be uniform with respect to the starting point. Considering the case where the starting point $x$ belongs to a ball of radius $\tau$ we prove total variation bounds in term of the spectral gap and $\tau$.

\section{Essential spectrum}

We start by studying the essential spectrum of $T_h$ in the tempered and Gaussian cases.
>From the definition of $d\nu_h$, it is easy to see that there exists some constant $c_1,c_2>0$ such that $ c_1h^d\leq Z_h\leq c_2h^d$.
Let us define the operator $\Omega:L^2(\R^d,dx)\rightarrow L^2(\R^d,d\nu_h)$ by
\be\label{defconj}
\Omega f(x)=\sqrt{\frac{Z_h}{m_h(x)\rho(x)}}f(x).
\ee
which is unitary, and let $\tilde T_h$ defined by $\tilde T_h=\Omega^*T_h\Omega$ so that
\[
\tilde T_hf(x)=a_h(x)\bT_h(a_hf)
\]
with $a_h$ defined in \eqref{eq:def_ah} and (with $\alpha_d={\rm Vol}(B_{\rr^d}(0,1))$)
\be\label{bT}
\bT_hg(x):=\frac 1 {\alpha_dh^{d}}\int_{\vert x-y\vert <h}f(y)dy.
\ee
Using the semiclassical Fourier transform it is easy to see that 
\[\bT_h=G_d(hD_x),  \textrm{ with } G_d(\xi)=\frac 1{\alpha_d}\int_{\vert z\vert \leq 1}e^{iz\xi}dz.\]
This function depends only on $|\xi|$, it is clearly real valued and $ -1<M\leq G_d(\xi)\leq 1$ for all $\xi$ if 
$M$ is defined in \eqref{limsup}. Moreover, $G_d$ tends to zero at infinity and $G_d(\xi)=1$ if and if only $\xi=0$.

Let us first prove \eqref{hyp0} assuming \eqref{boundderiv}: we have by assumption on $\rho$ that for all $x,y\in\rr^d$
with $|x-y|\leq h$
\[ -Ch\sup_{z\in B_h(x)}\rho(z)\leq \rho (x)-\rho(y)\leq Ch\sup_{z\in B_h(x)}\rho(z)\]
and therefore if $Ch<1$
\[  (1-Ch)\sup_{z\in B_h(x)}\rho(z)  \leq   \rho(x) \leq \sup_{z\in B_h(x)}\rho(z)\]
which implies  
\[  \rho(x)\Big(1-\frac{Ch}{1-Ch}\Big)\leq \rho(y)\leq \rho(x)\Big(1+\frac{Ch}{1-Ch}\Big).\]
and thus \eqref{hyp0}. 
 
The function $a_h$ is then bounded and $A_h$ of \eqref{limsup} is well defined. 
We first prove 
\begin{prop}\label{specess}
Suppose that $\rho$ is tempered in the sense of \eqref{boundderiv}, then $\sigma_{\rm ess}(T_h)\subset [MA_h,A_h]$. 
If moreover $A_h=\lim_{|x|\to \infty}a^2_h(x)$, then the inclusion above is an equality.
\end{prop}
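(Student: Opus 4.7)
The plan is to exploit the unitary conjugation $T_h \cong \tilde T_h = a_h \bT_h a_h$ on $L^2(\R^d,dx)$ set up above the statement, so that the problem reduces to computing $\sigma_{\rm ess}(\tilde T_h)$. Two ingredients drive the argument: (i) $G_d$ is real, continuous, tends to $0$ at infinity and attains its extrema $M$ and $1$, so that $G_d(\R^d)=[M,1]$ and in particular $M\leq \bT_h\leq 1$ as self-adjoint operators (and $M\leq 0$ since $G_d\to 0$ at infinity forces $\min G_d$ to be nonpositive); (ii) for any $\psi\in L^2(\R^d)$ the operator $\psi\bT_h$ is Hilbert--Schmidt, since
\[ \iint |\psi(x)|^2(\alpha_d h^d)^{-2}\indic_{|x-y|<h}\,dy\,dx=(\alpha_d h^d)^{-1}\|\psi\|_2^2<\infty. \]

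For the inclusion $\sigma_{\rm ess}(\tilde T_h)\subset[MA_h,A_h]$, I would fix $\epsilon>0$, pick $R$ with $a_h^2\leq A_h+\epsilon$ on $\{|x|\geq R\}$, choose $\chi\in C_c^\infty(\R^d)$ equal to $1$ on $B_R$, and set $\beta=(1-\chi)a_h$. Distributing $a_h=\chi a_h+\beta$ gives
\[ \tilde T_h=\beta\bT_h\beta+\chi a_h\bT_h\chi a_h+\chi a_h\bT_h\beta+\beta\bT_h\chi a_h. \]
Since $\chi a_h\in L^2(\R^d)$, point (ii) together with its adjoint makes each of the last three terms compact, hence $\tilde T_h-\beta\bT_h\beta$ is compact. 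Using $M\leq 0$ and $\|\beta\|_\infty^2\leq A_h+\epsilon$, point (i) gives
\[ \sigma(\beta\bT_h\beta)\subset[M(A_h+\epsilon),A_h+\epsilon]. \]
Weyl's theorem on invariance of essential spectrum under compact perturbations then forces $\sigma_{\rm ess}(\tilde T_h)\subset[M(A_h+\epsilon),A_h+\epsilon]$, and letting $\epsilon\to 0$ completes this half.

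For the reverse inclusion under the hypothesis $A_h=\lim_{|x|\to\infty}a_h^2(x)$, I would construct a Weyl singular sequence for each $\mu\in[MA_h,A_h]$. By (i) choose $\xi_0\in\R^d$ with $G_d(\xi_0)=\mu/A_h$, fix $\phi\in C_c^\infty(\R^d)$ with $\|\phi\|_2=1$, choose scales $\epsilon_n\to 0$ with $\epsilon_n/h\to\infty$, and select $x_n\to\infty$ whose translates $\phi((\cdot-x_n)/\epsilon_n)$ have pairwise disjoint support, then put
\[ u_n(x)=\epsilon_n^{-d/2}\,\phi((x-x_n)/\epsilon_n)\,e^{ix\cdot\xi_0/h}. \]
Then $\|u_n\|_2=1$ and $u_n\rightharpoonup 0$, and I would show $(\tilde T_h-\mu)u_n\to 0$ in $L^2$ in two parallel steps. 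Spatially, the tempered bound $|d\rho|\leq C\rho$ yields a Lipschitz control of $\log\rho$ that passes through the integral defining $m_h$ to give uniform smallness of $a_h^2(x)/a_h^2(x_n)-1$ on $\supp u_n$, and combined with $a_h^2(x_n)\to A_h$ this produces $\|a_h u_n-\sqrt{A_h}\,u_n\|_2\to 0$. On the Fourier side, $\hat u_n$ concentrates at $\xi_0/h$ on the scale $\epsilon_n^{-1}$, so the multiplier $G_d(h\xi)$ varies in a ball of radius $h/\epsilon_n\to 0$ around $\xi_0$, and a Taylor estimate gives $\|\bT_h u_n-G_d(\xi_0)u_n\|_2\to 0$. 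Combining the two yields $\tilde T_h u_n=a_h\bT_h(a_h u_n)\to A_h G_d(\xi_0)u_n=\mu u_n$ in $L^2$, exhibiting $\mu$ in the essential spectrum.

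The main obstacle is the scale balancing in the Weyl sequence: $\epsilon_n$ must be chosen strictly between $h$ (so that $\bT_h$ acts nearly as the scalar $G_d(\xi_0)$ on $u_n$) and the natural length scale on which $a_h$ is nearly constant. The tempered hypothesis is precisely what supplies the uniform Lipschitz control of $a_h$ needed for this to work as $x_n\to\infty$, which also explains why the reverse inclusion requires the existence of a genuine limit of $a_h^2$ at infinity rather than merely a lim sup.
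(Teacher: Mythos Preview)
Your argument for the inclusion $\sigma_{\rm ess}(\tilde T_h)\subset[MA_h,A_h]$ is correct and matches the paper's strategy: both isolate a compact piece via a spatial cutoff and bound the remainder using $M\leq \bT_h\leq 1$ together with $\sup_{|x|\geq R}a_h^2$. The only cosmetic difference is that you use a smooth cutoff $\chi$ and identify the cross terms as Hilbert--Schmidt via $\psi\bT_h$ with $\psi\in L^2$, while the paper uses sharp indicators and the finite propagation speed of $\tilde T_h$; the content is the same.

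For the reverse inclusion your approach is genuinely different from the paper's, and it contains a slip. The paper argues again by compact perturbation: writing $a_h=\sqrt{A_h}+\eps_h$ with $\eps_h(x)\to 0$ at infinity, one has
\[
\tilde T_h=A_h\bT_h+\eps_h\bT_h a_h+\sqrt{A_h}\,\bT_h\eps_h,
\]
and the last two summands are compact because each is a product of a multiplication by a function vanishing at spatial infinity with $\bT_h=G_d(hD)$, whose symbol vanishes at frequency infinity. Hence $\sigma_{\rm ess}(\tilde T_h)=\sigma_{\rm ess}(A_h\bT_h)=A_h\cdot G_d(\R^d)=[MA_h,A_h]$, and no Weyl sequence is needed.

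Your Weyl-sequence route is also viable, but the scale condition is misstated: for \emph{fixed} $h>0$ the requirements $\epsilon_n\to 0$ and $\epsilon_n/h\to\infty$ are mutually exclusive. What the Fourier step actually needs is $h/\epsilon_n\to 0$, i.e.\ $\epsilon_n\to\infty$, so that $G_d(h\xi)-G_d(\xi_0)$ is uniformly small on the support of $\hat u_n$. With $\epsilon_n\to\infty$ the spatial step still goes through provided you push $x_n$ out faster, say $|x_n|-C\epsilon_n\to\infty$, so that $\supp u_n\subset\{|x|>R_n\}$ with $R_n\to\infty$; then the hypothesis $a_h^2\to A_h$ at infinity already gives $\|(a_h-\sqrt{A_h})u_n\|_2\to 0$, and your Lipschitz detour through $|d\rho|\leq C\rho$ becomes unnecessary. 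Once this is corrected, your three-term splitting of $(\tilde T_h-\mu)u_n$ works as written. The trade-off is that the paper's compact-perturbation argument is shorter and reuses the mechanism of the first half, whereas your construction is more hands-on and produces an explicit singular sequence.
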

\begin{proof}
Let  $R>0$, then the operator $\tilde T_h$ can be written under the form 
\[ \tilde T_h= \indic_{|x|>R}\tilde T_h\indic_{|x|>R}+ \indic_{R<|x|<R+h}\tilde T_h\indic_{|x|<R}+\indic_{|x|<R}\tilde T_h\indic_{R<|x|<R+h}
\]
since $\tilde T_h$ increases support by a set of diameter at most $h$. 
The kernels of the last two operators in the right hand side
is in $L^2(\R^d\x\R^d, dx\otimes dx)$, and thus these operators are compact. We thus deduce that the essential spectrum of $\tilde T_h$ is given by that of $S_h^R=\indic_{|x|>R}\tilde T_h\indic_{|x|>R}$.
Since $S_h^R=b_h^R\overline{T}_hb_h^R$ with $b_h^R=\indic_{|x|>R}a_h(x)$ and since $\bT_h$ is a bounded self-adjoint operator satisfying 
\[  M||f||^2_{L^2}\leq \cjg \bT_hf,f\cjd_{L^2}, \quad ||\bT_hf ||_{L^2} \leq ||f||_{L^2}\]
and $a_h(x)>0$ we deduce easily that  $\sigma_{\rm ess}(S^R_h)\subset[-MA_h^R,A_h^R]$ 
where $A_h^R:=\sup_{|x|\geq R}a_h(x)^2$. It then suffices to take the limit as $R\to \infty$.
Now if in addition $a^2_h(x)$ has a limit $A_h$ when $|x|\to \infty$, we can write 
\be\label{decomp}
\tilde T_h=A_h \bT_h +\eps_h(x)\bT_ha_h(x)+A_h^\demi\bT_h\eps_h(x)
\ee
with $\eps_h(x):=a_h(x)-A_h^\demi$ converging to $0$ as $|x|\to \infty$. In particular, using that 
$|G_d(\xi)|\to 0$ when $|\xi|\to \infty$, we deduce that the last two operators in \eqref{decomp} are
compact on $L^2$. Since, $\bT_h$ is a function of the Euclidean Laplacian (or radial Fourier multiplier)  
the spectrum of $\bT_h$ on $L^2(\rr^d,dx)$ is absolutely continuous and consists of 
$[M,1]$, which is the range of $G_d(\xi)$. This achieves the proof since the essential spectrum of $\til{T}_h$
is that of $A_h\bT_h$ by \eqref{decomp}. 
\end{proof}

We also describe the asymptotic behaviour of $A_h$:
\begin{lem}\label{asympAh}
If $\rho$ satisfies \eqref{hyp1} and \eqref{hyp2}, then the following asymptotic holds as $h\to 0$
\[A_h=1-\frac{\kappa}{2(d+2)}h^2+O(h^4)\]
where $\kappa=\liminf_{|x|\rightarrow \infty}\frac{-\Delta\rho(x)}{\rho(x)}$.
\end{lem}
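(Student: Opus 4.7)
The plan is to Taylor expand $\rho(y)$ around $x$ inside the integral defining $m_h(x)$, exploit the symmetry of the ball to eliminate odd-order terms, and use the pointwise derivative bound \eqref{hyp1} to make the remainder estimates uniform in $x$ for $|x|\ge R$.

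Concretely, I would first fix $R$ large enough that \eqref{hyp1} and \eqref{hyp2} hold on $\{|x|\ge R\}$, and write, for any $x$ with $|x|\ge R+1$ and $h\le 1$,
\[
\rho(y)=\sum_{|\alpha|\le 3}\frac{(y-x)^\alpha}{\alpha!}\partial^\alpha\rho(x)+\sum_{|\alpha|=4}\frac{(y-x)^\alpha}{\alpha!}\int_0^1 4(1-t)^3\partial^\alpha\rho(x+t(y-x))\,dt.
\]
Using \eqref{hyp1} and the fact that \eqref{boundderiv} implies $\rho(x+t(y-x))\le C'\rho(x)$ uniformly for $|y-x|\le h\le 1$, the remainder is pointwise bounded by $Ch^4\rho(x)$ with a constant independent of $x$. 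Integrating over $B_h(x)$ with $z=y-x$, the $|\alpha|=1$ and $|\alpha|=3$ terms vanish by the antisymmetry $z\mapsto -z$. The $|\alpha|=2$ term reduces, via $\int_{B_h(0)} z_iz_j\,dz=\delta_{ij}\tfrac{\alpha_d h^{d+2}}{d+2}$, to $\tfrac{\alpha_d h^{d+2}}{2(d+2)}\sum_i\partial_i^2\rho(x)=-\tfrac{\alpha_d h^{d+2}}{2(d+2)}\Delta\rho(x)$ (recall $\Delta=-\sum\partial_i^2$). Putting these together yields, uniformly in $|x|\ge R+1$,
\[
m_h(x)=\alpha_d h^d\rho(x)\left(1+\frac{h^2}{2(d+2)}\frac{-\Delta\rho(x)}{\rho(x)}+r_h(x)\right), \qquad |r_h(x)|\le Ch^4.
\]

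From the definition $a_h^2(x)=\alpha_d h^d\rho(x)/m_h(x)$ and the expansion $(1+u)^{-1}=1-u+O(u^2)$ (valid since $V(x)\ge\kappa_0$ combined with \eqref{hyp1} gives $V$ pointwise bounded in $|x|\ge R$, actually more precisely the second-order term is $O(h^2)$ with constant depending only on $d$ and $C_2$), we obtain, uniformly in $|x|\ge R+1$,
\[
a_h^2(x)=1-\frac{h^2}{2(d+2)}V(x)+\tilde r_h(x),\qquad |\tilde r_h(x)|\le C'h^4.
\]
Taking supremum over $|x|\ge R'$ for $R'\ge R+1$ and then letting $R'\to\infty$ converts $\sup V(x)$ into $\kappa=\liminf_{|x|\to\infty}V(x)$ (with a sign flip: $\sup(-V)=-\inf V$), giving
\[
A_h=\lim_{R'\to\infty}\sup_{|x|\ge R'}a_h^2(x)=1-\frac{h^2}{2(d+2)}\liminf_{|x|\to\infty}V(x)+O(h^4)=1-\frac{\kappa h^2}{2(d+2)}+O(h^4).
\]

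The only subtle step is making sure the $O(h^4)$ remainder in the Taylor expansion is genuinely uniform in $x$ at infinity: this is precisely where hypothesis \eqref{hyp1} is used, allowing us to dominate all partial derivatives $\partial^\alpha\rho$ by $\rho$ itself on $\{|x|\ge R\}$ and thus by $\rho(x)$ on the ball $B_h(x)$ once $h$ is small. Everything else is a routine integration identity on the Euclidean ball plus the standard commutation of limsup/liminf with the leading-order expansion.
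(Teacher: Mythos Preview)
Your proof is correct and follows essentially the same approach as the paper's: Taylor expand $\rho$ around $x$, use the antisymmetry of the ball to kill odd-order terms, compute the second-order contribution as $-\frac{\alpha_d h^{d+2}}{2(d+2)}\Delta\rho(x)$, bound the fourth-order remainder uniformly via \eqref{hyp1}, invert to obtain $a_h^2(x)=1-\frac{h^2}{2(d+2)}V(x)+O(h^4)$, and pass to the $\limsup$. The only minor remark is that in your parenthetical justification of the $O(h^4)$ when expanding $(1+u)^{-1}$, the boundedness of $V$ on $\{|x|\ge R\}$ comes purely from \eqref{hyp1} with $|\alpha|=2$ (giving $|V|\le dC_2$); the lower bound from \eqref{hyp2} plays no role there.
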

\begin{proof}
If $\rho$ is tempered, we expand $m_h(x)=\mu(B_h(x))$ with respect to $h$ and use assumption (\ref{hyp1}):
\[\begin{split}
m_h(x)= &\,  h^d\int_{\vert z\vert <1}\rho(x+hz)dz\\
=& \, \alpha_dh^d\rho(x)+\demi h^{d+2}\sum_{i,j}\pl_{x_i}\pl_{x_j}
\rho (x) \int_{|z|\leq 1}z_iz_j dz+O(h^{d+4}\rho_4)\\
=  &\,\alpha_dh^d\rho(x)-\frac{\beta_d}{2d} h^{d+2}\Delta\rho(x)+O(h^{d+4}\rho_4)  
\end{split}\]
with $\vert\rho_4(x)\vert\leq \rho(x)$ and  $\beta_d:=\int_{\vert z\vert <1}\vert z\vert^2dz$.
Using the definition of $a_h$, it follows from Lemma \ref{lem:prop_ah} below that
\be\label{eq:expand_ah}
a^2_h(x)=1+h^2\gamma_d \frac{\Delta \rho(x)}{\rho(x)}+O(h^4)
\ee 
with $\gamma_d=\frac{\beta_d}{2d\alpha_d}=\frac{1}{2(d+2)}$ and the $O(h^4)$ is uniform in $x\in\rr^d$.
Hence, it follows from \eqref{eq:def_kappa} that
\[
A=\limsup_{|x|\rightarrow \infty}a_h^2(x)=1+\gamma_dh^2\liminf_{|x|\rightarrow \infty}\frac{\Delta\rho(x)}{\rho(x)}+O(h^4)
\]
and the proof is complete.
\end{proof}

\begin{remk} In the tempered case, the operator $\gamma_dL_\rho=\frac 1 {2(d+2)}(\Delta+\frac{-\Delta\rho}{\rho})$ has essential spectrum
contained in $[\frac{\kappa}{2(d+2)},\infty)$. If in addition $\kappa=\lim_{|x|\to \infty}\frac{-\Delta\rho}{\rho}$, then 
the essential spectrum is exactly $\sigma_{\rm ess} (L_\rho)=[\frac{\kappa}{2(d+2)},\infty)$ by Theorem 13.9 of \cite{HisSig}.
\end{remk}

Now for the Gaussian case
\begin{prop}\label{prop:spec_ess_gauss}
If $\rho$ is Gaussian, then $T_h$ is a compact operator.
\end{prop}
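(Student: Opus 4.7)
The plan is to work with the unitarily conjugated operator $\tilde T_h=\Omega^* T_h\Omega=a_h(x)G_d(hD_x)a_h(x)$ on $L^2(\R^d,dx)$. Since $\Omega$ is unitary, $T_h$ is compact iff $\tilde T_h$ is. The operator $G_d(hD_x)$ is a Fourier multiplier whose symbol $\xi\mapsto G_d(h\xi)$ is a continuous function tending to $0$ at infinity. So the whole question reduces to showing that the multiplier $a_h$ is a continuous function tending to $0$ at spatial infinity: once that is established, $\tilde T_h$ is a product of the form $f(x)g(D_x)f(x)$ with $f,g\in C_0(\R^d)$, which is a standard compact operator on $L^2(\R^d,dx)$ (it is the norm limit of Hilbert--Schmidt operators obtained by truncating $f$ and $g$ to compact supports, since the kernel of $f(x)g(D_x)$ becomes $(2\pi)^{-d}f(x)\check g(x-y)$).

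The key computation is therefore to show $a_h(x)\to 0$ as $|x|\to\infty$ for $\rho(x)=\beta e^{-\alpha|x|^2}$. Writing
\[
\frac{m_h(x)}{\alpha_d h^d\rho(x)}=\frac{1}{\alpha_d h^d}\int_{|z|\leq h}e^{-\alpha|z|^2}e^{-2\alpha x\cdot z}\,dz,
\]
the integrand is maximized, over $|z|\leq h$, near $z=-h\,x/|x|$ for $|x|$ large, and Laplace's method (or a direct lower bound on a small cap around that point) gives
\[
\frac{m_h(x)}{\alpha_d h^d\rho(x)}\geq c(h)|x|^{-(d-1)/2}e^{2\alpha h|x|}
\]
for $|x|$ large. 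Consequently $a_h^2(x)=\alpha_d h^d\rho(x)/m_h(x)\to 0$ exponentially as $|x|\to\infty$. Continuity of $a_h$ follows from continuity of $\rho$ and $m_h$, and $a_h$ is globally bounded by the estimate \eqref{hyp0}, which holds in the Gaussian case.

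Combining these two ingredients proves that $\tilde T_h$ is compact on $L^2(\R^d,dx)$, hence $T_h$ is compact on $L^2(\R^d,d\nu_h)$. The only nontrivial step is the decay of $a_h$, and it is mild: the Gaussian density is so peaked that a ball of fixed radius centered far out captures points with density much larger than $\rho(x)$ itself, forcing $m_h(x)/\rho(x)$ to grow rather than remain bounded as in the exponential-type case.
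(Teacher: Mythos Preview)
Your proof is correct and follows essentially the same route as the paper: reduce to $\tilde T_h=a_h\bar T_ha_h$, note that $G_d(h\xi)\to 0$ as $|\xi|\to\infty$, and show $a_h(x)\to 0$ as $|x|\to\infty$ by bounding $m_h(x)/(h^d\rho(x))$ from below (the paper uses a cruder conic-region lower bound rather than your Laplace asymptotics, but the idea is the same). The compactness of the resulting product $f(x)g(D_x)f(x)$ with $f,g\in C_0(\R^d)$ is exactly the ``standard argument'' the paper invokes.
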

\begin{proof}
 The symbol $G_d(\xi)$ of $\bT_h$ is decaying to $0$ as $|\xi|\to 0$, a standard argument shows that 
if $\lim_{|x|\to \infty} a_h(x)\to 0$, then $\bT_ha_h$ is compact on $L^2$.
We write 
\be\label{eq:form_ah}
\frac{m_{h}(x)}{h^d\rho(x)}=\int_{|z|\leq 1}e^{-2hx.z-h^2|z|^2}dz
\ee
and by bounding below this integral by a $dz$ integral on a conic region $-z.x>|z|.|x|/2$, we see that
 it converges to $\infty$ when $|x|\to \infty$, which proves the claim. 
\end{proof}

\begin{remk}
In the Gaussian case, the operator $L_\rho=\Delta+4\alpha^2|x|^2-2d\alpha$ has compact resolvent and discrete spectrum.
\end{remk}
 
\noindent\textbf{Notational convention}: For the following sections, all the tempered densities we shall consider will be  
smooth tempered densities  of exponential type (ie. satisfying \eqref{hyp1} and \eqref{hyp2}), 
and therefore we will abuse notation and just call them tempered. 
 
\section{Spectral analysis of $T_h$}

We recall here some  notations. Let $a=a(x,\xi;h)$ be an $h$-dependent family of $C^\infty(\R^{2d})$ function 
and $m(x,\xi)$ be an order function as in \cite{DiSj99}. We 
say that $a$ belongs to the symbol class $S(m)$ if there exists some $h_0>0$ and constants $C_{\alpha,\beta}$ such that for
any $\alpha,\beta\in \N^d$, any $0<h\leq h_0$
\[\vert\partial_x^\alpha\partial_\xi^\beta a(x,\xi;h)\vert\leq C_{\alpha,\beta}m(x,\xi)\]
 For any $a\in S(m)$, we define ${\rm Op}_h(a)$ by
\[{\rm Op}_h(a)f(x)=\frac{1}{(2\pi h)^d}\int e^{\frac{i(x-y).\xi}{h}}a(x,\xi;h)f(y)dyd\xi\]
The standard theory of such operators is developped in \cite{DiSj99}, \cite{EvZw}, \cite{Ma}.

\subsection{Preliminary estimates}
Let us start by some estimates on the symbols of the operator $\tilde{T}_h$, which will be useful
to study its eigenfunctions.
\begin{lem}\label{lem:prop_Gd}
The function $G_d(\xi)$ belongs to $S(\<\xi\>^{-\max(1,\frac{d-1}2)})$. 
\end{lem}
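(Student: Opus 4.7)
I would prove the estimate by a single integration by parts combined with stationary phase on the boundary sphere. Using the identity $e^{iz\cdot\xi}=(i|\xi|^2)^{-1}\xi\cdot\nabla_z e^{iz\cdot\xi}$ (valid for $\xi\neq 0$) together with the divergence theorem, one obtains the exact formula
\[
\alpha_d G_d(\xi)=\frac{1}{i|\xi|^2}\int_{S^{d-1}}(\xi\cdot\omega)\,e^{i\omega\cdot\xi}\,dS(\omega).
\]
The crude bound $|\xi\cdot\omega|\leq|\xi|$ already yields $|G_d(\xi)|\leq C|\xi|^{-1}$ for $|\xi|\geq 1$, which settles the cases $d\leq 3$ (where $\max(1,(d-1)/2)=1$).

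For $d\geq 4$ I would apply the stationary phase lemma to the surface integral. On $S^{d-1}$ the phase $\omega\mapsto\omega\cdot\xi$ has exactly two non-degenerate critical points $\omega=\pm\xi/|\xi|$, at which the amplitude $\xi\cdot\omega$ equals $\pm|\xi|$ and the tangential Hessian has size comparable to $|\xi|$. The $(d-1)$-dimensional stationary phase formula then gives
\[
\Big|\int_{S^{d-1}}(\xi\cdot\omega)e^{i\omega\cdot\xi}\,dS(\omega)\Big|\leq C|\xi|\cdot|\xi|^{-(d-1)/2},
\]
whence $|G_d(\xi)|\leq C|\xi|^{-(d+1)/2}$ for $|\xi|\geq 1$, which is strictly stronger than the announced decay.

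The derivative estimates follow from the same argument applied to
\[
\partial_\xi^\beta G_d(\xi)=\frac{1}{\alpha_d}\int_{|z|\leq 1}(iz)^\beta e^{iz\cdot\xi}\,dz,
\]
since the smooth amplitude $z^\beta$ plays no role in the integration by parts and the stationary phase contributions remain of the same order in $|\xi|$. Smoothness and boundedness of $G_d$ on the compact set $\{|\xi|\leq 1\}$ are automatic, since $G_d$ is the Fourier transform of the integrable compactly supported function $\alpha_d^{-1}\indic_{\{|z|\leq 1\}}$. Combining the two regimes places $G_d$ in $S(\<\xi\>^{-\max(1,(d-1)/2)})$, as required.

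\textbf{Main obstacle.} The only real technicality is to ensure that the constants in the stationary phase expansion depend on $\xi$ only through $|\xi|$, uniformly in the direction $\xi/|\xi|$; this is immediate from the rotational invariance of $S^{d-1}$, which lets one reduce to a fixed reference direction. Extension to derivatives of arbitrary order $\beta$ is then purely bookkeeping.
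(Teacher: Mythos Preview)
Your argument is correct and is a genuine variant of the paper's proof, not a rewording of it. Both proofs ultimately rest on stationary phase for the oscillatory integral over $S^{d-1}$, but the reductions to that sphere integral differ. The paper introduces a radial cut-off $\chi\in C_0^\infty(B(0,1))$, disposes of the piece near the origin by non-stationary phase, and then writes the annular piece in polar coordinates so that the angular integral $\int_{S^{d-1}}e^{ir\omega\cdot\xi}\omega^\beta\,d\omega$ is treated by stationary phase for each fixed $r\in[\tfrac12,1]$; this yields the stated bound $O(|\xi|^{-(d-1)/2})$. Your divergence-theorem identity skips the cut-off entirely and lands directly on a single sphere integral; after stationary phase you get the sharper decay $O(|\xi|^{-(d+1)/2})$, which is indeed the true rate (recall $G_d(\xi)=c_d|\xi|^{-d/2}J_{d/2}(|\xi|)$). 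So your route is marginally cleaner and recovers one extra half-power, though the lemma as stated does not need it.

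One phrase in your write-up is inaccurate and should be tightened: the amplitude $z^\beta$ does \emph{not} ``play no role'' in the integration by parts. When you apply the divergence theorem to $\int_{|z|\le 1} z^\beta\,\xi\cdot\nabla_z e^{iz\cdot\xi}\,dz$, you pick up, besides the boundary term on $S^{d-1}$, a volume term $-\int_{|z|\le 1}(\xi\cdot\nabla_z z^\beta)e^{iz\cdot\xi}\,dz$. This extra term is harmless --- it is $|\xi|^{-1}$ times a linear combination of integrals of the same form with $|\beta|$ lowered by one, hence $O(|\xi|^{-(d+3)/2})$ by induction on $|\beta|$ --- but you should say so explicitly rather than suppress it. With that induction made explicit, your derivative estimates go through and the proof is complete.
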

\bp Suppose first that $d\geq 2$.
It is clear that the function $G_d$ is smooth. When $\vert \xi\vert\geq 1$, 
one has 
\[
\partial_\xi^\beta G_d(\xi)=\frac 1{\alpha_d}\int_{\vert z\vert \leq 1}(i z)^\beta e^{iz\xi}dz.
\]
Let $\chi\in C_0^\infty(B_{\rr^d}(0,1))$ be a radial cut-off equal to $1$ on $B_{\rr^d}(0,\frac 1 2)$.
Then the non-stationary phase theorem shows that 
\[
\int_{\vert z\vert \leq 1}\chi(z)z^\beta e^{iz\xi}dz=O(\vert\xi\vert^{-\infty})
\]
On the other hand,
\[
I_\chi:=\int_{\vert z\vert \leq 1}(1-\chi(z))z^\beta e^{iz\xi}dz
=\int_{\frac 1 2}^1(1-\chi)(r)r^{d-1+|\beta|}\Big(\int_{S^{d-1}}e^{ir\omega\xi}\omega^\beta d\omega\Big) dr
\]
For any $r\geq \frac 1 2$ the phase $\omega\mapsto  \omega\xi$ has only two stationary points:  these points are 
non-degenerate so that the stationary phase theorem implies  $I_\chi=O(\vert\xi\vert^{-\frac{d-1}2})$.
In the case $d=1$, the computation is simpler since $G_d(\xi)=\frac{\sin(\xi)}{\xi}$. We leave it to the reader.
\ep

We will also need the following result on the function $a_h$.
\begin{lem}\label{lem:prop_ah} The function $a_h$ is smooth and the following hold true:
\begin{itemize}
 \item if $\rho$ is tempered, then
\be\label{eq:ah_symb_temp}
\forall\alpha\in\N^d,\;\exists C_\alpha>0,\,\forall h\in]0,1],\,|\partial_x^\alpha a_h(x) |\leq C_\alpha h^2
\ee
and there exists $C>0$ such that 
\be\label{eq:ah_inv_temp}
\forall x\in\R^d,\, |\frac 1 {a^2_h(x)}-1-\frac{h^2}{2(d+2)} \frac{-\Delta\rho}{\rho}|\leq Ch^4
\ee
\item if $\rho$ is Gaussian, then
\be\label{eq:ah_symb_gauss}
\forall\alpha\in\N^d,\;\exists C_\alpha>0,\,\forall h\in]0,1],\,|\partial_x^\alpha a_h(x) |\leq C_\alpha h^{\vert\alpha\vert}.
\ee
and
\be\label{eq:ah_inv_gauss1}
\forall M>0,\exists C_M>0,\,\forall |x|<Mh^{-1},\,\Big|\frac 1 {a^2_h(x)}-1-\frac{(4\alpha^2|x|^2-2d\alpha)}{2(d+2)}h^2\Big|\leq C_M|x|^4h^4,
\ee
\be\label{eq:ah_inv_gauss2}
\exists C,R>0,\forall |x|\geq R,\,\,\frac 1 {a^2_h(x)}\geq \max(1+Ch^2|x|^2, Ce^{h|x|})
\ee
\end{itemize}
\end{lem}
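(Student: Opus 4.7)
The plan is to reduce everything to the explicit representation
\[
\frac{1}{a_h^2(x)} = \frac{1}{\alpha_d}\int_{|z|\leq 1}\frac{\rho(x+hz)}{\rho(x)}\,dz,
\]
obtained by the change of variables $y=x+hz$ in $m_h(x)=\mu(B_h(x))$. Each of the four assertions then reduces to an expansion or lower bound of this integral.

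For the tempered part, the key preliminary observation is that \eqref{hyp1} forces every derivative of $\phi:=\log\rho$ to be bounded on $\{|x|>R\}$: writing $\partial^\alpha\rho/\rho=\partial^\alpha\phi+P_\alpha(\{\partial^\beta\phi\}_{|\beta|<|\alpha|})$ via Fa\`a di Bruno, an induction on $|\alpha|$ gives $\|\partial^\alpha\phi\|_\infty\leq C'_\alpha$. Writing the ratio as $\rho(x+hz)/\rho(x)=e^{\phi(x+hz)-\phi(x)}$ and Taylor expanding to fourth order, the odd moments $\int_{|z|<1}z^\beta\,dz$ vanish, so only the zeroth and second order terms contribute at order $h^2$, yielding
\[
\int_{|z|\leq 1}\frac{\rho(x+hz)}{\rho(x)}\,dz=\alpha_d+\frac{h^2}{2}\sum_{i,j}\frac{\partial_i\partial_j\rho(x)}{\rho(x)}\int z_iz_j\,dz+O(h^4).
\]
The identity $\int z_iz_j\,dz=\delta_{ij}\alpha_d/(d+2)$ (from $\beta_d=\alpha_d d/(d+2)$) produces exactly \eqref{eq:ah_inv_temp}. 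The same parity argument applied after differentiating the integrand in $x$ kills the $O(1)$ and $O(h)$ contributions and gives $|\partial_x^\alpha(a_h^{-2}-1)|\leq C_\alpha h^2$; combined with the chain rule for $(a_h^{-2})^{-1/2}$ and the fact that $a_h^{-2}$ stays in a fixed compact subinterval of $(0,\infty)$, this transfers to \eqref{eq:ah_symb_temp} (read as a bound on $\partial^\alpha(a_h-1)$).

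In the Gaussian case, substituting $\rho(x)=\beta e^{-\alpha|x|^2}$ gives the exact formula
\[
F(x):=\frac{1}{a_h^2(x)}=\frac{1}{\alpha_d}\int_{|z|\leq 1}e^{-2\alpha h x\cdot z-\alpha h^2|z|^2}\,dz.
\]
Differentiating under the integral yields $\partial^\alpha F=\frac{(-2\alpha h)^{|\alpha|}}{\alpha_d}\int z^\alpha e^{-2\alpha h x\cdot z-\alpha h^2|z|^2}\,dz$, whence $|\partial^\alpha F|\leq(2\alpha h)^{|\alpha|}F$. Applying Fa\`a di Bruno to $a_h=F^{-1/2}$, one expresses $\partial^\alpha a_h$ as $a_h$ times a polynomial in the logarithmic derivatives $F^{-1}\partial^\beta F$ with $|\beta|\leq|\alpha|$, each of which is $O(h^{|\beta|})$; combined with the uniform upper bound $a_h\leq C$ (which follows for bounded $|x|$ from the tempered-style expansion and for large $|x|$ from the next step) this gives \eqref{eq:ah_symb_gauss}. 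For \eqref{eq:ah_inv_gauss1}, one Taylor expands the exponential on $\{|x|<Mh^{-1}\}$ where the exponent stays bounded by $2\alpha M+\alpha h^2$: the $O(h)$ and $O(h^3)$ contributions vanish by parity and, using once more $\int z_iz_j\,dz=\delta_{ij}\alpha_d/(d+2)$, the $O(h^2)$ term is exactly $\frac{h^2(4\alpha^2|x|^2-2d\alpha)}{2(d+2)}$, matching $V(x)=4\alpha^2|x|^2-2d\alpha$; the remainder is controlled by $C_M|x|^4h^4$.

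The main obstacle is the lower bound \eqref{eq:ah_inv_gauss2}, because once $h|x|$ is large the Taylor expansion breaks down and the growth must be extracted directly from the integral. The polynomial form $1+Ch^2|x|^2$ on $R\leq|x|\leq Mh^{-1}$ is an immediate consequence of \eqref{eq:ah_inv_gauss1}. For the exponential form, the plan is to restrict the integral defining $F(x)$ to the cone
\[
\mathcal{C}=\bigl\{z=r\omega:\ \tfrac{1}{2}\leq r\leq 1,\ -\omega\cdot x/|x|\geq\tfrac{1}{2}\bigr\},
\]
on which $-x\cdot z\geq|x|/4$ and $|z|^2\leq 1$; this produces $F(x)\geq(c_d/\alpha_d)e^{\alpha h|x|/2-\alpha h^2}\geq C\,e^{ch|x|}$ for some $c,C>0$ uniform in $h\in(0,h_0]$, where $c_d>0$ is the volume of $\mathcal{C}$. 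The two lower bounds coincide in order of magnitude near $|x|\sim h^{-1}$, and their maximum yields the full estimate. The truly delicate point is precisely this transition between the perturbative regime $h|x|\lesssim 1$, where a Taylor expansion governs $F$, and the non-perturbative regime $h|x|\gtrsim 1$, which is exclusive to the Gaussian setting.
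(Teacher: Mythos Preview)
Your approach is essentially the paper's—work with $g_h=a_h^{-2}$ via the integral representation, Taylor expand, and use Fa\`a di Bruno—and the tempered case, the Gaussian derivative bound \eqref{eq:ah_symb_gauss}, the expansion \eqref{eq:ah_inv_gauss1}, and the exponential half of \eqref{eq:ah_inv_gauss2} are all handled correctly.

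There is, however, a genuine gap in your argument for the quadratic lower bound $a_h^{-2}(x)\geq 1+Ch^2|x|^2$ for \emph{all} $|x|\geq R$. Your patching fails in the transition regime: \eqref{eq:ah_inv_gauss1} gives the bound only for $h|x|\leq M_0$ with $M_0$ small (since the remainder constant $C_M$ grows like $e^{2\alpha M}$, you cannot push $M$ large), while your cone estimate yields $F(x)\geq C_1e^{ch|x|}$ with $C_1=c_d/\alpha_d<1$. For $h|x|$ moderate—say $h|x|\in[M_0,K]$ with $K$ the scale at which $C_1e^{cK}$ first exceeds $1$—neither bound gives $F\geq 1$, let alone $F\geq 1+Ch^2|x|^2$. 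The claim that ``the two lower bounds coincide in order of magnitude near $|x|\sim h^{-1}$'' is true but does not close this gap.

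The paper avoids this by using the \emph{exact} second-order Taylor remainder, which is manifestly nonnegative: after symmetrising $z\mapsto -z$ one has
\[
a_h^{-2}(x)=\frac{1}{\alpha_d}\int_{|z|\leq 1}\cosh(2\alpha h\,x\cdot z)\,e^{-\alpha h^2|z|^2}\,dz,
\]
and the elementary inequality $\cosh u\geq 1+u^2/2$ then gives
\[
a_h^{-2}(x)\geq e^{-\alpha h^2}\Big(1+\tfrac{2\alpha^2}{d+2}h^2|x|^2\Big),
\]
which is $\geq 1+C'h^2|x|^2$ for all $|x|\geq R$ at once, with no regime splitting. This is the missing ingredient; with it your proof goes through.
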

\bp
It follows from (\ref{eq:def_ah}) that $a_h(x)=F\circ g_h(x)$ with $F(z)=z^{-1/2}$ and 
$g_h(x)=\frac{m_h(x)}{\alpha_dh^d\rho(x)}$.
Following the arguments of the proof of Lemma \ref{asympAh}, we have when $\rho$ is tempered
(using $\int_{|z|<1}z_idz=\int_{|z|<1}z_iz_jz_kdz=0$)
\be\label{eq:taylor_gh}
\begin{split}
g_h(x)&=1-\frac{h^2}{2(d+2)}\frac{\Delta\rho}{\rho}+\frac{h^4}{\alpha_d\rho(x)}\int_{|z|<1}\rho_4(x,z)dz\\
&=1-\frac{h^2}{2(d+2)}\frac{\Delta\rho}{\rho}+h^4r_4(x)
\end{split}
\ee
where $\rho_4(x,z)$ is a function which satisfies for all $\alpha\in\N^d$:
$$|\partial_x^\alpha\rho_4(x,z)|\leq C_\alpha \rho(x)$$
 uniformly with respect to $x\in\R^d, \vert z\vert\leq 1$ and
  $r_4(x)$ has all its derivatives uniformly bounded on $\R^d$. 
In particular, for any $\alpha\in\N^d\setminus\{0\}$, $\partial_x^\alpha g_h(x)=O(h^2)$.
Hence,  for $h>0$ small enough,  Fa\`a di Bruno formula combined with (\ref{eq:taylor_gh}) shows that $a_h$  is a smooth bounded function such that
\be\label{eq:estim_deriv_ah}
\forall\alpha\in\N^d\setminus\{0\},\;\partial_x^\alpha a_h(x)=O(h^2).
\ee
This shows that $a_h$ enjoys estimate (\ref{eq:ah_symb_temp}) while
 (\ref{eq:ah_inv_temp}) is a direct consequence of (\ref{eq:taylor_gh}).\\

Suppose now that $\rho(x)$ is Gaussian. 
It follows from (\ref{eq:form_ah}) that 
\[
g_h(x)=\frac 1{\alpha_d}\int_{|z|\leq 1}e^{-2hx.z-h^2|z|^2}dz
\]
Hence, there exists $c_0>0$ such that for all $x\in\R^d$, $h\in]0,1]$, $g_h(x)\geq c_0$. Moreover, for all $\alpha\in\N^d$ we have
\[
\partial^\alpha_xg_h(x)=\frac{1}{\alpha_d}\int_{\vert z\vert<1}(-2hz)^\alpha e^{-2hz.x-h^2|z|^2}dz
\]
so that there exists $C_\alpha>0$ such that
\be\label{eq:estim_gh_gauss}
\forall h\in]0,1],\,\forall x\in\R^d,\,\vert \partial^\alpha_xg_h(x)\vert\leq C_\alpha h^{|\alpha|}\vert g_h(x)\vert
\ee
Using again Fa\`a di Bruno formula, we get easily that $a_h$ is a smooth function such that
for any $\alpha\in\N^d$,
\be\label{eq:FaDiB}
\partial_x^\alpha a_h(x)=\sum_{\pi\in\Pi_{\vert\alpha\vert}} C_{\vert\pi\vert}g_h(x)^{-\frac 1 2(2\vert\pi\vert+1)}\Pi_{B\in\pi}\frac{\partial^{|B|}g_h(x)}{\Pi_{j\in B}\partial x_j}
\ee
where $\Pi_{\vert\alpha\vert}$ denotes the set of all partitions of $\{1,\ldots,\vert\alpha\vert\}$, $\vert\pi\vert$ denotes the number of blocks in the partition $\pi$ and $\vert B\vert$ denotes the cardinal of $B$, and $C_{|\pi|}$ is an explicit constant depending on $|\pi|$. Combining this formula with estimate (\ref{eq:estim_gh_gauss}), we get
\be\label{eq:contr_deriv_symb_gauss}
\vert \partial_x^\alpha a_h(x)\vert\leq \sum_{\pi\in\Pi_{\vert\alpha\vert}} C_{\vert\pi\vert}\vert g_h(x)\vert^{-\frac 1 2(2\vert\pi\vert+1)}\Pi_{B\in\pi}\vert h g_h(x)\vert^{|B|}\leq C\vert a_h(x)\vert h^{|\alpha|}
\ee
which proves (\ref{eq:ah_symb_gauss}).

Let us now prove the estimates on $a_h^{-2}=g_h$. 
The same computation as in the tempered case remains valid if we assume that $|hx.z|$ is bounded, which holds true if $h|x|$ is bounded. This shows
(\ref{eq:ah_inv_gauss1}).
In order to prove  (\ref{eq:ah_inv_gauss2}), we observe that there exist constants $c,C>0$ such that for all $0<h<1$ 
\[
\begin{split}
a_h(x)^{-2}=&\alpha_d^{-1}\int_{S^{d-1}}\int_{0<r\leq 1}e^{-2hr x.\theta-h^2r^2}r^{d-1}drd\theta\\
=&\alpha_d^{-1}\int_{S^{d-1}}\int_{0<r\leq 1}\Big(1+4r^2h^2(x.\theta)^2\int_0^1 e^{-2t h rx.\theta}\frac{(1-t)}{2}dt\Big)e^{-h^2r^2}r^{d-1}drd\theta\\
\geq &1+4h^2\alpha_d^{-1}\int_{S^{d-1}}\int_{0<r\leq 1} r^2(x.\theta)^2\Big(\int_0^1 e^{-2t h rx.\theta}\frac{(1-t)}{2}dt\Big)e^{-h^2r^2}r^{d-1}drd\theta\\
& -ch^2\\
a_h(x)^{-2}\geq & 1+ Ch^2|x|^2-ch^2 
\end{split}\]
for $|x|>R$ with $R>0$ large, the last inequality being proved by the same argument as for Proposition \ref{prop:spec_ess_gauss}. 
Enlarging $R$ and modifying $C>0$ if necessary, this shows the quadratic bound in (\ref{eq:ah_inv_gauss2}). The exponential bound in  (\ref{eq:ah_inv_gauss2}) follows easily from the inequality above, by bounding below the integral by an integral 
on a region $\{\theta.x/|x|<-(1-\eps), r\geq 1-\eps\}$ for some small $\eps>0$. 
\ep

\subsection{Regularity and decay of eigenfunctions}
We are now in position to prove the first estimates on the eigenfunctions of $\tilde T_h$. 

Observe that for any $1/2>\delta>0$ small, there exists $s_\delta>0$ such that 
 $|G_d(\xi)|\leq 1-2\delta$ when $|\xi|^2\geq s_\delta$.
\begin{lem}\label{lem:regularite}
Let  $C>0$ and $\lambda_h\in [1-Ch^2,1]$ be an eigenvalue of $T_h$ (which can belong to the essential spectrum)
in the tempered case, and $\lambda_h\in[1-\delta,1],\,\delta>0$ in the Gaussian case. 
Let $e_h\in L^2(\R^d,dx)$ satisfy $\tilde T_he_h=\lambda_h e_h$, $\parallel e_h\parallel_{L^2(\rr^d)}=1$. 
Then $e_h$ belongs to all Sobolev spaces and for all $s\in\R$
\be\label{eq:lem:regularite1}
\parallel e_h\parallel_{H^s(\R^d)}=O\Big(\Big(1+\frac{1-\lambda_h}{h^2}\Big)^{\frac s 2}\Big).
\ee
Moreover,  
\be\label{eq:lem:regularite2}
\parallel(1-\chi)(h^2\Delta)e_h\parallel_{H^s(\R^d)}=O(h^\infty)
\ee
where $\chi\in C_0^\infty(\R)$ is equal to $1$ near $0$ in the 
tempered case and $\chi=1$ on $[-s_\delta,s_\delta]$ in the Gaussian case.
\end{lem}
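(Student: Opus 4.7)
The strategy is to exploit the semiclassical pseudodifferential structure of $\tilde T_h = a_h G_d(hD)a_h$. By Lemma~\ref{lem:prop_Gd} the Fourier symbol $G_d$ decays at infinity and satisfies $|G_d(\xi)|\leq 1-2\delta$ on $\{|\xi|^2\geq s_\delta\}$, while by Lemma~\ref{lem:prop_ah} the multiplier $a_h$ has all its $x$-derivatives controlled by positive powers of $h$. I would first rewrite the eigenvalue equation as
\[
(a_h^2 G_d(hD) - \lambda_h)e_h = -a_h\,[G_d(hD),a_h]\,e_h,
\]
where the Moyal expansion, combined with $\partial_x^\alpha a_h = O(h^2)$ (tempered) or $O(h^{|\alpha|})$ (Gaussian), shows that the right-hand side is small. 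The principal symbol of $\tilde T_h-\lambda_h$ is thus $\sigma_h(x,\xi) := a_h(x)^2 G_d(\xi)-\lambda_h$.

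For the frequency-localization estimate \eqref{eq:lem:regularite2}, the key observation is that on the support of $(1-\chi)(|h\xi|^2)$ we have $|\xi|^2\geq c/h^2\geq s_\delta$ for $h$ small, so $|G_d(\xi)|\leq 1-2\delta$. Combined with the uniform bound $a_h^2\leq 1+O(h^2)$, which holds in both cases (in the Gaussian case because $g_h(x)\geq \alpha_d^{-1}\int_{|z|\leq 1}e^{-h^2|z|^2}dz\geq e^{-h^2}$ by the symmetry $z\mapsto -z$ and $\cosh\geq 1$), and with $\lambda_h\geq 1-Ch^2$ (resp.\ $\lambda_h\geq 1-\delta$), this yields $|\sigma_h(x,\xi)|\geq \delta/2$ uniformly on this region for $h$ small. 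Hence $\tilde T_h-\lambda_h$ is semiclassically elliptic there, and I would construct an asymptotic parametrix $A_h = \textup{Op}_h(b)$ with $b\sim \sum_{j\geq 0}h^j b_j$ obtained by Moyal inversion of $\sigma_h$, satisfying, for every $N\in\N$,
\[
A_h(\tilde T_h-\lambda_h)=(1-\chi)(h^2\Delta)+S_h^{(N)},\qquad \|S_h^{(N)}\|_{H^{-N}\to H^N}=O(h^N).
\]
Applied to $e_h$ and using $\|e_h\|_{L^2}=1$, this gives $(1-\chi)(h^2\Delta)e_h = -S_h^{(N)}e_h = O(h^N)$ in every Sobolev norm; letting $N\to\infty$ proves \eqref{eq:lem:regularite2}.

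For the regularity estimate \eqref{eq:lem:regularite1}, I would combine $G_d(\xi)=1-\gamma_d|\xi|^2+O(|\xi|^4)$ near $\xi=0$ with \eqref{eq:ah_inv_temp} (resp.\ \eqref{eq:ah_inv_gauss1} on the relevant regions) to derive the operator expansion
\[
(I-\tilde T_h)\chi(h^2\Delta)=\gamma_d h^2 L_\rho\,\chi(h^2\Delta)+h^4 E_h,
\]
with $E_h$ bounded $H^s\to H^{s-4}$ uniformly in $h$. Writing $e_h=\chi(h^2\Delta)e_h$ up to an $O(h^\infty)$ error in any Sobolev norm (by the first step), the eigenvalue equation becomes
\[
\gamma_d h^2 L_\rho e_h=(1-\lambda_h)e_h+O\bigl(h^4\|e_h\|_{H^4}\bigr)+O(h^\infty).
\]
Standard elliptic estimates for $L_\rho=\Delta+V$ (straightforward in the tempered case where $V$ is bounded; in the Gaussian case obtained by integration by parts using that $\Delta V$ is a constant, yielding $\|\Delta u\|+\|Vu\|\lesssim \|L_\rho u\|+\|u\|$) then give $\|e_h\|_{H^2}\lesssim 1+(1-\lambda_h)/h^2$. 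Iterating the equation by applying powers of $L_\rho$ yields $\|e_h\|_{H^{2k}}\lesssim (1+(1-\lambda_h)/h^2)^k$, and interpolation delivers the bound for all $s\in\R$.

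The main obstacle will be to execute the parametrix construction rigorously in the Gaussian case: although $a_h^2$ is uniformly bounded, its derivatives decay only like $h^{|\alpha|}$ and do not give extra decay at spatial infinity, so the symbol $a_h^2G_d(\xi)-\lambda_h$ must be handled in an appropriate symbol class (say $S(1,g)$ with $g=dx^2+d\xi^2$). Verifying that the Moyal iterates defining the parametrix converge uniformly in $x\in\R^d$ and at high frequency is the principal technical point to be addressed.
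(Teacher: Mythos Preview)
Your parametrix argument for the frequency localization \eqref{eq:lem:regularite2} is essentially the paper's own argument and is fine.

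The real gap is in the Gaussian case of \eqref{eq:lem:regularite1}. The operator expansion you write,
\[
(I-\tilde T_h)\chi(h^2\Delta)=\gamma_d h^2 L_\rho\,\chi(h^2\Delta)+h^4 E_h,
\]
simply does not hold with any uniformly bounded $E_h$. The potential $V(x)=4\alpha^2|x|^2-2d\alpha$ is unbounded, and the only control you have on $1-a_h^2(x)$ is \eqref{eq:ah_inv_gauss1}, whose remainder is $O(|x|^4h^4)$ and only valid on $\{|x|<Mh^{-1}\}$; on the complement, $a_h^{-2}$ grows like $e^{h|x|}$ by \eqref{eq:ah_inv_gauss2}. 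So neither the term $h^2\gamma_d V(x)$ nor the putative remainder $h^4E_h$ can be separated off globally, and the harmonic-oscillator elliptic estimate you invoke has no equation to be applied to (you also have no a priori information that $|x|^2 e_h\in L^2$, so $e_h$ is not known to lie in the domain of $L_\rho$). The paper never attempts this expansion. Instead it divides the eigenvalue equation by $a_h^2$ to get
\[
h^2\Delta F(h^2\Delta)e_h=\bigl(1-\lambda_h a_h^{-2}(x)\bigr)e_h+h^2{\rm Op}_h(\tilde r_h)e_h,
\]
pairs with $\chi(h^2\Delta)^2 e_h$, and splits the scalar product $\langle(1-\lambda_h a_h^{-2})\chi e_h,\chi e_h\rangle$ into the regions $|x|\le R$ and $|x|>R$. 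The crucial observation is that, by \eqref{eq:ah_inv_gauss2}, one has $1-\lambda_h a_h^{-2}(x)\le (1-\lambda_h)-C\lambda_h h^2|x|^2<0$ for $|x|\ge R$ with $R$ chosen proportional to $\sqrt{(1-\lambda_h)}/h$, so that piece has a \emph{favorable sign} and can be dropped; the remaining piece on $|x|\le R$ is then controlled by \eqref{eq:ah_inv_gauss1} and is $O(1-\lambda_h)$. This positivity/sign argument is the missing idea.

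A secondary point, even in the tempered case: your remainder $h^4E_h$ with $E_h:H^s\to H^{s-4}$ bounded gives only $\|h^4E_h\chi(h^2\Delta)e_h\|_{L^2}=O(h^4\|e_h\|_{H^4})$, and with the crude bound $\|e_h\|_{H^4}=O(h^{-4})$ this is $O(1)$, so your bootstrap does not close as stated. The paper avoids this by \emph{not} expanding $1-G_d(\xi)=|\xi|^2F(|\xi|^2)$ further: keeping $\Delta F(h^2\Delta)$ as the leading part leaves a remainder $h^2{\rm Op}_h(\tilde r_h)$ with $\tilde r_h\in S(1)$, which is $O(h^2)$ on $L^2$ directly, and then $F>0$ on the support of $\chi$ lets one invert to get $\|\chi(h^2\Delta)e_h\|_{H^2}=O(1)$ without needing prior $H^4$ control.
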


\bp
We use the some arguments similar to those used in \cite{LebMi}, the difference is that now we are working in $\rr^d$ instead of a compact manifold: let us write $\lambda_h=1-h^2z_h$ with $0<z_h<\kappa\gamma_d$ in the tempered case and $0<z_h<\delta h^{-2}$ in the gaussian case; and start from $(\tilde T_h-\lambda_h e_h)=0$.
Since $\tilde T_h=a_h\bT_ha_h$ it follows from Lemmas \ref{lem:prop_Gd} and \ref{lem:prop_ah} that $\tilde T_h$ is a semiclassical pseudodifferential operator on $\R^d$ of order $m\leq -1$.
In particular, it maps $L^2(\R^d)$ into $H^1(\R^d)$ and $\parallel \tilde T_h\parallel_{L^2\rightarrow H^1}=O(h^{-1})$.
Since $e_h=\frac 1{\lambda_h}\tilde T_h e_h$ and $\lambda_h$ is bounded from below, we deduce $\parallel e_h\parallel_{H^1}=O(h^{-1})$. Iterating this argument, we finally get
\be\label{eq:estim_sob_semicl}
\parallel e_h\parallel_{H^s}=O(h^{-s})
\ee
for any $s>0$ (using interpolation for non integral $s$).
Let us denote $p_h(x,\xi)$ the symbol of $\tilde T_h$. It follows from usual symbolic calculus and Lemma \ref{lem:prop_ah} that 
\be\label{eq:form_ph}
p_h(x,\xi)=a_h^2(x)G_d(\xi)+h^mr_h(x,\xi)
\ee
for some symbol $r_h\in S(\cjg \xi\cjd ^{-\max(1,\frac {d-1}2)})$ and with $m=3$ if $\rho$ is tempered and $m=2$ if $\rho$ is gaussian.

Suppose that $\rho$ is smooth tempered and let $\chi\in C_0^\infty(\R)$ be equal to $1$ near $0$.
Since $|G_d(\xi)|\leq 1$ with $G_d(\xi)\to 0$ as $|\xi|\to \infty$ and $G_d(\xi)=1 \iff \xi=0$,   
we deduce that for any cut-off function $\tilde \chi$ equal to $1$ near $0$, we have 
\[
(1-\tilde\chi(\xi))G_d(\xi)\leq (1-\epsilon)(1-\tilde{\chi}(\xi))
\]
for some $\epsilon>0$ depending on $\tilde{\chi}$.
Since $\lambda_h=1+O(h^2)$ and $a_h=1+O(h^2)$,  the symbol 
$$q_h(x,\xi)=(1-\til{\chi}(\xi))(\lambda_h-p_h(x,\xi))$$
 is bounded from below by $\frac \epsilon 2(1-\tilde\chi(\xi))$ for $h>0$ small enough. 
Moreover it is, up to a lower order symbol, equal to the symbol of $(1-\tilde{\chi}(h^2\Delta))(\la_h-\tilde{T}_h)$ and 
thus by taking $(1-\til{\chi})=1$ on the support of $(1-\chi)$, we can construct a parametrix $L_h$ with symbol $\ell_h(x,\xi)\in S(1)$ such that 
\[L_h(1-\tilde{\chi}(h^2\Delta))(\la_h-\tilde{T}_h)=(1-\chi(h^2\Delta))+h^\infty {\rm Op}_h(w_h) \]
for some symbol $w_h\in S(1)$. This clearly shows that 
\be\label{eq:local_freq}
\parallel (1-\chi(h^2\Delta))e_h\parallel_{L^2}=O(h^\infty)
\ee
and by interpolation with \eqref{eq:estim_sob_semicl} we get
\be\label{eq:local_freq_Hs}
\parallel (1-\chi(h^2\Delta))e_h\parallel_{H^s}=O(h^\infty).
\ee

It remains to show that $\chi(h^2\Delta)e_h$ is bounded in $H^s$. 
We have
\[
({\rm Op}_h(p_h)-1+h^2z_h)e_h=0.
\]
Let $b_h(x,\xi)=p_h(x,\xi)-1+h^2z_h$, then since $z_h$ is bounded, we know from (\ref{eq:form_ph}) that 
\begin{equation}\label{bhxi}
b_h(x,\xi)=a_h^2(x)G_d(\xi)-1+h^2r_h(x,\xi)
\end{equation}
for some $r_h\in S^0(1)$.
By Taylor expansion of $G_d(\xi)$ at $\xi=0$, we see that 
there exists a smooth function $F$ on $\rr^+$, strictly positive and such that $1-G_d(\xi)=\vert \xi\vert^2F(|\xi|^2)$. 
Since $a_h^2(x)=1+O(h^2)$, we get
\[
b_h(x,\xi)=-\vert \xi\vert^2F(|\xi|^2)+h^2 \tilde r_h(x,\xi)
\]
with $\tilde r_h\in S^0(1)$.
Combined with (\ref{eq:local_freq}), this shows that for any $\chi\in C_0^\infty(\R^d)$
\[
h^2\Delta F(h^2\Delta)\chi(h^2\Delta_g)e_h=O_{L^2}(h^2).
\]
Since $F$ is strictly positive on the support of $\chi$, we can construct a parametrix like above and obtain that 
\[
\parallel \chi(h^2\Delta_g)e_h\parallel_{H^2}=O(1)
\]
Iterating this process, it follows that the above bounds hold in all Sobolev spaces.\\

Consider now the case of a Gaussian density and let us prove \eqref{eq:lem:regularite2}.
For $\chi\in C_0^\infty(\R)$ equal to $1$ on $[-s_\delta, s_\delta]$ (and $0 \leq \chi\leq 1$) we get
\[
(1-\chi)(|\xi|^2)G_d(\xi)\leq (1-2\delta)(1-\chi)(|\xi|^2).
\]
Since we have $a_h\leq 1+O(h^2)$ and $\lambda_h\geq 1- \delta$ for small $h>0$, this shows that 
$(1-\chi(|\xi|^2))(\la_h-p_h(x,\xi)) \geq \frac\delta 2 (1-\chi)(|\xi|^2)$ for $h$ small and 
\eqref{eq:local_freq}, \eqref{eq:local_freq_Hs} are still valid.
Let us prove \eqref{eq:lem:regularite1}.
By definition, we have
${\rm Op}_h(b_h)e_h=0$ with $b_h(x,\xi)=a_h^2(x)G_d(\xi)-\la_h+h^2r_h(x,\xi)$ for some $r_h\in S(1)$. 
Thanks to (\ref{eq:contr_deriv_symb_gauss}),
we have $\vert\partial_x^\alpha r_h(x,\xi)\vert\leq C_\alpha \vert a_h^2(x)\vert$ for any $\alpha$
and $\vert\partial_x^\alpha (a_h^{-2}r_h(x,\xi))\vert\leq C_\alpha$.
Using again the structure of $G_d$ and dividing by $a_h^2$, it follows that 
\be\label{eq:reg_gauss0}
h^2\Delta F(h^2\Delta)e_h=(1-\lambda_ha_h^{-2}(x)+h^2{\rm Op}_h(\tilde r_h))e_h
\ee
for some symbol $\tilde r\in S(1)$.
Taking the scalar product with $\chi(h^2\Delta)^2e_h$ and using the fact that ${\rm Op}_h(\tilde r_h)$ is bounded on $L^2$, we get 
\be
\begin{split}\label{eq:reg_gauss}
\<h^2\Delta F(h^2\Delta)\chi(h^2\Delta)e_h,\chi(h^2\Delta)e_h\>&=\<(1-\lambda_ha_h^{-2}(x))\chi(h^2\Delta)e_h,\chi(h^2\Delta)e_h\>+O(h^2)\\
&=I_R(h)+J_R(h)+O(h^2)
\end{split}
\ee
where
\[\begin{gathered}
I_R(h):=\<\psi_R(x)(1-\lambda_ha_h^{-2}(x))\chi(h^2\Delta)e_h,\chi(h^2\Delta)e_h\> \\
J_R(h):=\<(1-\psi_R(x))(1-\lambda_ha_h^{-2}(x))\chi(h^2\Delta)e_h,\chi(h^2\Delta)e_h\>
\end{gathered}\]
with $\psi_R(x):=\indic_{|x|\leq R}$. Hence, it follows from (\ref{eq:ah_inv_gauss1}) that 
$I_R(h)=O(h^2R^2+1-\lambda_h)$.
On the other hand, setting $R=(1-\la_h)/(h^2\eps)$ with $\eps>0$ small enough 
but independent of $h$,  (\ref{eq:ah_inv_gauss2}) gives that
$1-\lambda_ha_h^{-2}(x)\leq -\lambda_h Ch^2|x|^2+(1-\lambda_h)<0$ if $|x|\geq R$, and hence $J_R(h)\leq 0$. 
Combined with the estimate on $I_R$, this shows that 
\[
\<h^2\Delta F(h^2\Delta)\chi(h^2\Delta)e_h,\chi(h^2\Delta)e_h\>=O(1-\lambda_h).
\]
Dividing by $h^2$ and using again the fact that $F>0$ we obtain $\|\Delta \chi(h^2\Delta)e_h\|_{L^2}=O(1+\frac{1-\lambda_h}{h^2})$. Iterating this argument and using interpolation, we obtain the desired estimates for any $H^s$.
\ep

In order to control the multiplicity of the eigenvalues  as in \cite{LebMi}, we need some compactness 
 of the family $(e_h)_h$. Since $\R^d$ is not bounded, the regularity of the eigenfunctions is not sufficient, 
 we need some decay property of the eigenfunctions near infinity. 
 For $R>0$, let $\chi_R$ be a smooth function equal to $1$ for $\vert x\vert \geq R+1$ and zero for $\vert x\vert \leq R$.
\begin{lem}\label{lem:localisation}
Let us assume that $\rho$ is tempered and let $\alpha\in]0,1[$. Suppose that $\lambda_h\in [1-\alpha h^2\frac{\kappa}{2(d+2)},1]$ and that $e_h\in L^2(\R^d,dx)$ 
satisfies $\tilde T_he_h=\lambda_h e_h$ and $||e_h||_{L^2(\rr^d)}=1$. Let $\phi\in C_0^\infty(\R)$, then there exists $R>0$ such that 
\[
\parallel \chi_R(x)\phi(h^2\Delta) e_h\parallel_{L^2(\rr^d)}=O(h^2)
\]
As a by-product, for any $s\in\R$, $\chi_R e_h$ goes to $0$ in $H^s(\rr^d)$ when $h$ goes to $0$, for any $s\geq 0$.
\end{lem}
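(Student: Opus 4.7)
The plan is to combine the positivity of the symbol $\lambda_h - a_h^2(x)G_d(\xi)$ for $|x|$ large with the eigenvalue equation and the uniform Sobolev regularity $\|e_h\|_{H^s}=O(1)$ from Lemma \ref{lem:regularite}. The starting point is symbolic positivity: using the expansion $a_h^2(x)=1+h^2\gamma_d\Delta\rho(x)/\rho(x)+O(h^4)$ from Lemma \ref{lem:prop_ah} and the hypothesis $-\Delta\rho/\rho\geq\kappa-\eta$ for $|x|\geq R(\eta)$, I choose $\eta=(1-\alpha)\kappa/2$ and $R\geq R(\eta)$, so that for $|x|\geq R$ and all $\xi\in\R^d$,
\[
\lambda_h-a_h^2(x)G_d(\xi)\geq \lambda_h-a_h^2(x)\geq c_0h^2,\qquad c_0:=\gamma_d(1-\alpha)\kappa/4,
\]
using $G_d\leq 1$. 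This yields the fundamental energy lower bound: for any $w\in L^2$ supported in $\{|x|\geq R\}$, using $\tilde T_h=a_h\bar T_h a_h$ and Plancherel with $G_d\leq 1$,
\[
\langle(\lambda_h-\tilde T_h)w,w\rangle\geq\int_{|x|>R}(\lambda_h-a_h^2)|w|^2\,dx\geq c_0h^2\|w\|_{L^2}^2.
\]
Taking $w=v:=\chi_R\phi(h^2\Delta)e_h$ (which is supported in $\{|x|\geq R\}$) gives $c_0 h^2\|v\|_{L^2}^2\leq\langle(\lambda_h-\tilde T_h)v,v\rangle$.

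For the upper bound, $\tilde T_h e_h=\lambda_h e_h$ implies $(\lambda_h-\tilde T_h)v=-[\tilde T_h,\chi_R\phi(h^2\Delta)]e_h$. Since $\bar T_h$ and $\phi(h^2\Delta)$ are Fourier multipliers (hence commuting), and since $[a_h,\cdot]=O_{L^2}(h^3)$ for any bounded Fourier multiplier by Lemma \ref{lem:prop_ah} ($\partial^\alpha a_h=O(h^2)$), the commutator reduces modulo $O_{L^2}(h^3)$ to $a_h[\bar T_h,\chi_R]\phi(h^2\Delta)a_he_h$. A careful Taylor expansion of $[\bar T_h,\chi_R]f$ — exploiting the vanishing of all odd moments $\int_{|z|<h} z^{\otimes(2k+1)}\,dz=0$ — yields for smooth $f$,
\[
[\bar T_h,\chi_R]f = 2\gamma_d h^2\,\nabla\chi_R\!\cdot\!\nabla f - \gamma_d h^2\,\Delta\chi_R\cdot f + O_{L^2}(h^4),
\]
so that $(\lambda_h-\tilde T_h)v=O_{L^2}(h^2)$, the bounds being uniform in $h$ thanks to Lemma \ref{lem:regularite}.

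Pairing with $v$ and integrating by parts on the annulus $\{R<|x|<R+1\}$ using $\chi_R\nabla\chi_R=\tfrac12\nabla(\chi_R^2)$, the $\Delta\chi_R$-contributions from the two Taylor terms cancel, and after taking the real part (legitimate since $\tilde T_h$ is self-adjoint),
\[
\langle(\lambda_h-\tilde T_h)v,v\rangle=\gamma_d h^2\!\int|\nabla\chi_R|^2|\phi(h^2\Delta) a_h e_h|^2\,dx+O(h^3)(1+\|v\|_{L^2}).
\]
Combining with the lower bound and absorbing the $O(h)\|v\|$ error, this reads
\[
c_0\|v\|_{L^2}^2\leq C\,\|\phi(h^2\Delta)e_h\|_{L^2(R<|x|<R+1)}^2+O(h).
\]

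The main obstacle is upgrading this to the claimed $\|v\|_{L^2}=O(h^2)$, since the right-hand side still involves the $L^2$-mass of $\phi(h^2\Delta)e_h$ on an annulus, which is a priori only $O(1)$. My plan is to iterate the preceding argument with nested cutoffs $\chi_{R+k}$ for $k=0,1,2,\ldots$, each moving the annulus outward by one unit, and to rewrite the resulting chain of inequalities as $B_{k+1}\leq(1-c_0/C)B_k+O(h)$, where $B_k:=\|\phi(h^2\Delta)e_h\|^2_{L^2(|x|>R+k)}$. This gives geometric decay of $B_k$ in $k$ down to an $O(h)$ plateau, which combined with the $O(h^2)$ spectral gap of $\tilde T_h$ above its essential spectrum (Proposition \ref{specess}) should propagate the smallness quantitatively to the desired $O(h^2)$ bound. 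Finally, the by-product $\chi_R e_h\to 0$ in $H^s$ follows by selecting $\phi\in C_0^\infty(\R)$ equal to $1$ near $0$, so that $\phi(h^2\Delta)e_h=e_h+O(h^\infty)$ by Lemma \ref{lem:regularite}, and interpolating the $L^2$-smallness with the uniform $H^s$ bound from the same lemma.
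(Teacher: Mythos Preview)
Your approach diverges from the paper's, and it contains a genuine gap precisely at the step you flag as ``the main obstacle.''

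The paper does not use an energy/quadratic-form argument. Instead, it first rewrites the eigenvalue equation as an effective Schr\"odinger-type equation: expanding $a_h^2 = 1 - \gamma_d h^2 V + O(h^4)$ and $G_d(\xi) = 1 - \gamma_d|\xi|^2 + O(|\xi|^4)$, and dividing through by $h^2$, one obtains
\[
\big(\Delta\,\tilde F(h^2\Delta) + V(x) - \tilde z_h\big)\,\phi(h^2\Delta)e_h = O(h),
\]
where $\lambda_h = 1 - h^2\gamma_d\tilde z_h$ and $\tilde F>0$. The crucial point is that the symbol $q_h(x,\xi)=|\xi|^2\tilde F(h\xi)+V(x)-\tilde z_h$ is bounded below by the \emph{constant} $(1-\alpha)\kappa/2$ for $|x|\geq R$ --- not merely by $c_0h^2$. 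One then builds a semiclassical parametrix on $\supp\chi_R$ and reads off the estimate directly, with no iteration.

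Your quadratic-form route inevitably loses a square root. From $c_0h^2\|v\|^2 \leq \langle(\lambda_h-\tilde T_h)v,v\rangle$ and your commutator expansion you arrive at $c_0\|v\|^2 \leq C\cdot(\text{mass on the annulus}) + O(h)$. The nested-cutoff iteration then gives $B_{k+1}\leq \theta B_k + O(h)$ with $\theta<1$, hence $B_k\leq \theta^k B_0 + O(h)/(1-\theta)$. For $k$ fixed this is only $O(1)$; for $k\sim\log(1/h)$ you reach the $O(h)$ plateau, but then the cutoff radius $R+k$ depends on $h$, contrary to the statement. Either way you get at best $\|v\|=O(\sqrt h)$ for a fixed $R$, not $O(h^2)$ or even $O(h)$. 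Your final appeal to Proposition~\ref{specess} is not an argument: that result only locates $\sigma_{\rm ess}(\tilde T_h)$ and provides no mechanism for upgrading the spatial decay of $e_h$.

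The remedy is exactly what the paper does: divide by $h^2$ \emph{before} estimating, so that the resulting operator is uniformly elliptic (lower bound independent of $h$) on $\{|x|>R\}$ and can be inverted by a parametrix rather than through a quadratic form. (Incidentally, the paper's own argument yields $O(h)$; the $O(h^2)$ in the statement appears to be a misprint, and only the by-product $\chi_Re_h\to 0$ in $H^s$ is used later.)
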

\bp
>From the preceding Lemma, we know that 
\[
(\Delta F(h^2\Delta)+{\rm Op}_h(\til{r}_h))\phi(h^2\Delta_g)e_h=O(h^\infty).
\]
for some $\til{r}_h\in S(1)$.
On the other hand, this term can be made more precise : it follows from Lemma \ref{lem:prop_ah} and equation \eqref{eq:form_ph}
that 
\[\begin{split} 
O(h)= &\Big(-\Delta F(h^2\Delta) -\demi \gamma_d (V(x){\rm Op}_h(G_d(\xi))+ {\rm Op}_h(G_d(\xi))V(x))+z_h\Big)\phi(h^2\Delta)e_h \\
 =& -(\Delta F(h^2\Delta) +\gamma_d V(x)-z_h)e_h+ O(h^2||e_h||_{H^2})
\end{split}\]
with $\la_h=1-h^2z_h$ and using  \eqref{eq:lem:regularite1}, 
we obtain 
\[
(\Delta \til{F}(h^2\Delta)+V(x)-\tilde z_h)\tilde f_h=O(h)
\]
with $\tilde f_h:=\phi(h^2\Delta_g)e_h$, $\tilde z_h:=z_h/\gamma_d$ and $\til{F}=F/\gamma_d$.
Let $q_h(x,\xi):=\vert \xi\vert^2\til{F}(h\xi)+ V(x)-\tilde z_h$. Since $F\geq 0$, 
it follows from assumption (\ref{hyp2}) that there exists $R>0$ such that for any $\xi\in\R^d$ and any $\vert x\vert\geq R$, we have $q_h(x,\xi)\geq (1- \alpha)\kappa/2$ if $1-\la_h\leq \alpha \kappa h^2/2(d+2)$. 
Hence we can build a parametrix for $q_h$ on the support of $\chi_R$ and 
this shows that $\parallel \chi_R\tilde f_h\parallel_{L^2}=O(h)$.
Using interpolation and the fact that $(e_h)$ is bounded in $H^s$, we obtain directly the same bounds in $H^s$.
\ep

\begin{lem}\label{lem:estim_eh_inf} Suppose that $\rho$ is Gaussian. Let $\delta>0$ and $\chi\in C_0^\infty(\R)$ be equal to $1$ on $[- s_\delta,s_\delta]$, then there exists $h_0$ such that, for  any $k,s\in\N$ there exists $C_{k,s}>0$ such that for all $h\leq h_0$ and any eigenfunction
 $e_h\in L^2(\R^d)$ of $\til{T}_h$ with eigenvalue $\lambda_h\in[1-\delta,1]$, we have
\be\label{eq:dec_fct_ppre}
\|\cjg x\cjd^k\chi(h^2\Delta)e_h\|_{H^s(\R^d)}\leq C_{k,s}\|\chi(h^2\Delta)e_h\|_{H^{s+k}(\R^d)}
\ee
\end{lem}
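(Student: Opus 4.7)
The plan is to induct on $k$, the case $k=0$ being a trivial identity. Set $f_h := \chi(h^2\Delta)e_h$ and $W_h(x) := \lambda_h a_h^{-2}(x) - 1$. The main input is equation \eqref{eq:reg_gauss0} from the proof of Lemma~\ref{lem:regularite}:
\begin{equation*}
h^2\Delta F(h^2\Delta)e_h + W_h(x)e_h = h^2 \mathrm{Op}_h(\tilde r_h)e_h,
\end{equation*}
together with the pointwise lower bound $W_h(x) \geq c h^2 \langle x\rangle^2 - C_0$, which follows from \eqref{eq:ah_inv_gauss1} on $\{|x| \leq c h^{-1}\}$ (where the $h^4|x|^4$ remainder is absorbed into the quadratic term) and from \eqref{eq:ah_inv_gauss2} on $\{|x| \geq c h^{-1}\}$, uniformly for $\lambda_h \in [1-\delta, 1]$ and small $h$.

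The first step is to apply $\chi(h^2\Delta)$ to the equation above: since $\chi(h^2\Delta)$ commutes with $h^2\Delta F(h^2\Delta)$, the only new terms are commutators with the multiplication operator $W_h$. By \eqref{eq:estim_gh_gauss} one has $|\partial_x^\alpha g_h| \leq C_\alpha h^{|\alpha|}g_h$, so $g_h = a_h^{-2}$ behaves like a tempered symbol in the rescaled variable $hx$; standard semiclassical symbolic calculus then gives that $[\chi(h^2\Delta), W_h]$ is an $O(h^2)$ perturbation in weighted $L^2$-spaces.

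The core estimate is a positive commutator argument. I would pair the equation for $f_h$ with $\varphi_R(x)^2 \langle x\rangle^{2(k-1)} f_h$, where $\varphi_R(x) = \varphi(x/R)$ is a cutoff introduced for a priori finiteness. The nonnegativity of $h^2\Delta F(h^2\Delta)$ (which holds because $F>0$) together with the lower bound on $W_h$ extracts a positive leading term $c h^2 \|\varphi_R \langle x\rangle^k f_h\|_{L^2}^2$. After commuting $\langle x\rangle^{2(k-1)}$ through $h^2\Delta F(h^2\Delta)$ by semiclassical calculus, the error terms are bounded by $h^2 \|\langle x\rangle^{k-1}f_h\|_{H^1}^2$, which by the induction hypothesis is at most $h^2 \|f_h\|_{H^k}^2$. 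Dividing by $h^2$ and letting $R \to \infty$ closes the induction in $L^2$. The extension from $L^2$ to $H^s$ follows by commuting $\partial^\alpha$ through the equation and using that $\chi(h^2\Delta)$ preserves Sobolev norms.

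The main obstacle is controlling the commutator $[\chi(h^2\Delta), a_h^{-2}]$, which is not a priori a bounded operator because $a_h^{-2}$ grows (exponentially, by \eqref{eq:ah_inv_gauss2}) at infinity. The resolution relies on viewing $a_h^{-2}$ as a symbol in the semiclassical variable $hx$, as afforded by \eqref{eq:estim_gh_gauss}; this provides the $h^2$ smallness while preserving the polynomial weight structure needed to close the induction.
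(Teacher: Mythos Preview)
Your positive-commutator scheme has a genuine gap in the step where you claim ``the error terms are bounded by $h^2\|\langle x\rangle^{k-1}f_h\|_{H^1}^2$''. The constant $C_0$ in your lower bound $W_h(x)\geq ch^2\langle x\rangle^2-C_0$ is \emph{not} $O(h^2)$: for $\lambda_h$ near $1-\delta$ and $x$ bounded one has $a_h^{-2}(x)=1+O(h^2)$, hence $W_h(x)=\lambda_h-1+O(h^2)\approx -\delta$, so $C_0\gtrsim\delta$. When you pair with $\varphi_R^2\langle x\rangle^{2(k-1)}f_h$ the potential contributes
\[
\langle W_h f_h,\varphi_R^2\langle x\rangle^{2(k-1)}f_h\rangle\;\geq\; ch^2\|\varphi_R\langle x\rangle^k f_h\|^2-C_0\|\varphi_R\langle x\rangle^{k-1}f_h\|^2,
\]
and the second term is $O(1)\cdot\|\langle x\rangle^{k-1}f_h\|^2$, not $O(h^2)$. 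After dividing by $h^2$ you pick up a factor $h^{-2}$ at each induction step, and the resulting constant $C_{k,s}$ blows up like $h^{-k}$. Throwing away the kinetic term (using only its nonnegativity) cannot cure this, and there is no Poincar\'e-type mechanism on $\R^d$ to absorb $C_0\|\langle x\rangle^{k-1}f_h\|^2$ into $h^2\|\nabla(\langle x\rangle^{k-1}f_h)\|^2$.

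The paper avoids this difficulty by localizing to $|x|>R$ where the potential is \emph{strictly} positive, $-1+\lambda_h a_h^{-2}(x)\geq C'h^2\langle x\rangle^2$, with no subtracted constant. One then simply divides the identity $(1-\lambda_h a_h^{-2})f_h=h^2\mathrm{Op}_h(r_h)\Delta f_h$ by the coefficient: the function $\psi_R(x)\,\langle x\rangle^2\,h^{-2}\,(-1+\lambda_h a_h^{-2})^{-1}$ belongs to $S(1)$ uniformly in $h$ (using \eqref{eq:ah_symb_gauss}), so $\langle x\rangle^2\psi_R f_h=\mathrm{Op}_h(\tilde r_h)\Delta f_h$ with $\tilde r_h\in S(1)$, giving $\|\langle x\rangle^2\psi_R f_h\|_{H^s}\leq C\|f_h\|_{H^{s+2}}$. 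The complementary region $|x|\leq R+1$ is compact and contributes trivially. Iterating yields the lemma in steps of two. This direct division also makes the commutator $[\chi(h^2\Delta),a_h^{-2}]$ you worry about essentially irrelevant: one works with \eqref{eq:minorJR}, where the multiplication acts on $f_h=\chi(h^2\Delta)e_h$ and no further commutation through $\chi$ is needed. Your scheme can be repaired by the same localization (pair only against $\psi_R^2\langle x\rangle^{2(k-1)}f_h$, so the $-C_0$ term never appears), but as written it does not close.
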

\bp 
It follows from \eqref{eq:reg_gauss0} and \eqref{eq:lem:regularite2} that 
\be\label{eq:minorJR}
(1-\lambda_ha_h^{-2}(x))\chi(h^2\Delta)e_h=h^2{\rm Op}_h(r_h)\chi(h^2\Delta)\Delta e_h
\ee
for some $r_h\in S(1)$.
Let $R>0$ be sufficiently large so that $a_h^{-2}(x)\geq 1+Ch^2|x|^2$ for $|x|\geq R$. Then, if $\la_h=1-h^2z_h$, one has 
for $|x|>R$
\begin{equation}\label{ahmoins2}
-1+\lambda_ha_h^{-2}(x)\geq h^2(C|x|^2-z_h)\geq C'h^2(1+|x|^2)
\end{equation}
for some $C'>0$ independent of $h$. 
We take $\psi_R\in C_0^\infty(\R^d)$ be equal to $1$ for $|x|\geq R+1$ and $0$ for $|x|\leq R$ , then
by \eqref{ahmoins2} and  \eqref{eq:ah_symb_gauss}, we deduce easily that 
$\cjg x\cjd^2h^{-2}(-1+\lambda_ha_h^{-2})^{-1}\psi_R\in S(1)$ and therefore
\[
\cjg x\cjd^2\psi_R(x)\chi(h^2\Delta)e_h={\rm Op}_h(\til{r}_h)\chi(h^2\Delta) \Delta e_h
\]
for some $\tilde r_h\in S(1)$.
Therefore, for any $s\geq 0$, we have
\[
\|\<x\>^2\chi(h^2\Delta)e_h\|_{H^s(\R^d)}\leq C \|\chi(h^2\Delta)e_h\|_{H^{s+2}(\R^d)}
\]
Iterating this argument $k/2$ times and using \eqref{eq:lem:regularite1}, we get  \eqref{eq:dec_fct_ppre}.
\ep

\section{Proof of Theorem \ref{th:analyse_spec_temp} and \ref{th:analyse_spec_gauss}}

\subsection{Spectrum localisation}
We work as in \cite{LebMi}  and we only give a sketch of the proof since it is rather similar. 
The main difference with the situation in \cite{LebMi} 
is that we work on unbounded domains, so that Sobolev embedding do not provide directly compactness.
In both tempered and Gaussian case, we will use the following observation:  suppose that $\varphi$ is a smooth function, then it follows from Lemma \ref{lem:prop_ah} and the expansion $G_d(\xi)=1-\gamma_d|\xi|^2+O(|\xi|^4)$ as $|\xi|\to 0$
that 
\be\label{eq:dvt_taylor_Th}
\frac{1-\tilde T_h}{h^2}\varphi=\gamma_dL_\rho \varphi+h^2\psi
\ee
where  $\|\psi\|_{L^2(\R^d)}=O(\|\varphi\|_{H^4(\R^d)})$ in the tempered case and $\|\psi\|_{L^2(B(0,Mh^{-1}))}=O(\||x|^4\varphi\|_{H^4(B(0,Mh^{-1}+1))})$  for any $h$-independent $M>0$ in the Gaussian case.

We start with the case of a tempered density and 
follow the strategy of \cite{LebMi}. Since $T_h$ and $\tilde T_h$ are unitarily conjugated by $\Omega:L^2(\R^d, dx)\rightarrow L^2(\R^d, d\nu_h)$, the eigenvalues of $T_h$ on $L^2(\R^d, d\nu_h)$ (and their multiplicities) are exactly those of $\tilde T_h$ on $L^2(\R^d, dx)$.\\
First, assume that $(L_\rho-\mu)e=0$ for some $\mu\in [0,\kappa)$ and $e\in H^2(\R^d)$, $\parallel e\parallel_{L^2(dx)}=1$. Then, $e$ is in fact in $C^\infty$ and using \eqref{eq:dvt_taylor_Th} with $\varphi=e$, we get easily
\[
\frac{1- \tilde T_h}{h^2} e=\gamma_d\mu  e+O_{L^2}(h^2).
\]
Since $\tilde T_h$ is self-adjoint, this shows that ${\rm dist}(\gamma_d\mu,\sigma(\Delta_h))=O(h^2)$ with
\[\Delta_h:=\frac{1-\til{T}_h}{h^2},\] 
and that there exist $C_0>0, C_1>0,h_0>0$ such that for all $0<h\leq h_0$ and $\mu\in \sigma(L_\rho)\cap [0,\kappa-C_1h^2)$, 
the number of eigenvalues of $\Delta_h$ in 
$[\gamma_d\mu-C_0h^2,\gamma_d\mu+C_0h^2]$ is bounded below by the multiplicity of $\mu$.

Conversely, consider an eigenfunction $e_h$ of $\Delta_h$ corresponding 
to an eigenvalue $z_h\in [0,\gamma_d\kappa)$, then 
using Lemma (\ref{lem:regularite}), we get 
 \[
z_h e_h=\Delta_h e_h=\gamma_dL_\rho e_h+O_{L^2}(h^2).
 \]
 This shows that all the eigenvalues of  $\Delta_h$ are at distance at most $Ch^2$ of the spectrum of $\gamma_d L_\rho$. Let us now consider an orthonormal set of eigenfunctions $e^j_h$ of $\Delta_h$ associated to the eigenvalues $z^j_h$ contained in 
$ [\gamma_d\mu-C_0h^2,\gamma_d\mu+C_0h^2]$ for some $\mu\in \sigma(L_\rho)\cap [0,\alpha\kappa]$, 
where with $C_0,C_1$ are the constants given above.
Let $R>0$ be fixed as in Lemma \ref{lem:localisation}. From Lemmas \ref{lem:localisation} and \ref{lem:regularite} , each eigenfunction can be decomposed as
\[
e^j_h=u^j_h+v^j_h
\]
with $u^j_h$ bounded in any $H^s$ and  supported in $B(0,R)$and $v^j_h$ converging to $0$ in $H^s$ when $h$ goes to $0$.
Since $H^s(B(0,R))$ is compactly embedded in $H^2$ for $s$ larger than $2$, we can assume 
(extracting a subsequence if necessary) that the $e^j_h$ converge to some $f^j$ in 
$H^2(\R^d,dx)$ and $z^j_h$ converges to $\mu/\gamma_d$.
Hence, the $(f^j)_j$ provide an orthonormal family of eigenfunctions 
of $L_\rho$ associated to the eigenvalue $\mu$. 
This shows that the number of eigenvalues of $\Delta_h$ in $[\gamma_d\mu-C_0h^2,\gamma_d\mu+C_0h^2]$  is exactly
the multplicity of $\mu$ as an eigenvalue of $L_\rho$,
and achieves the proof of Theorem \ref{th:analyse_spec_temp}.

Notice in particular that our proof does not rule out the possibility of an infinite sequence of eigenvalues $z^j_h$
for $\Delta_h$ converging to the bottom of the essential spectrum $\kappa$.\\

Assume now that $\rho$ is Gaussian and start with $(L_\rho-\mu)e=0$  with $\|e\|_{L^2}=1$. 
It follows from \eqref{eq:dvt_taylor_Th} that 
\[
\Delta_he=\indic_{|x|<h^{-1}}\gamma_dL_\rho e+\indic_{|x|\geq h^{-1}}\Delta_he+h^2\psi
\]
with $\psi$ supported in $B(0,Mh^{-1})$ and $\|\psi\|_{L^2}=O(\|\cjg x\cjd ^4e\|_{H^4(B(0,Mh^{-1}+1))})$. Since $e=p(x)e^{-\alpha|x|^2}$ for some polynomial $p$, then 
$\|\psi\|_{L^2}$ is bounded uniformly with respect to $h$. The same argument and  
$\indic_{|x|\geq h^{-1}}\Delta_h=\indic_{|x|\geq h^{-1}}\Delta_h\indic_{|x|\geq h^{-1}-h}$ shows that 
$\|\indic_{|x|\geq h^{-1}}\Delta_he\|_{L^2}=O(h^{-2}e^{-ch^{-2}})$.
This implies that 
\[
\Delta_h e=\gamma_d\mu  e+O_{L^2}(h^2)
\]
Like in the tempered case, it follows that  ${\rm dist}(\gamma_d\mu,\sigma(\Delta_h))=O(h^2)$ and that for any given $L>0$ there exists $C_0>0,h_0>0$ such that for all $0<h\leq h_0$ and all $\mu\in\sigma(L_\rho)$ with $\mu \leq L$, the number of eigenvalues of $\Delta_h$ in 
$[\gamma_d\mu-C_0h^2,\gamma_d\mu+C_0h^2]$ is bounded by the multiplicity of $\mu$.

Conversely, suppose now that $\tilde T_he_h=(1-h^2\gamma_dz_h)e_h$ for some $e_h\in L^2(\rr^d)$ such that $\parallel e_h\parallel_{L^2}=1$ and $z_h\in[0,L]$, $L>0$ being fixed. From Lemmas \ref{lem:regularite} and \ref{lem:estim_eh_inf}, we know that
\[
z_h e_h=\Delta_he_h=L_\rho e_h+O_{L^2}(h^2),
\]
this shows that the distance 
of the eigenvalues of  $\Delta_h$ (less than $L$) 
to $\sigma(L_\rho)$ is of order $O(h^2)$.

To get the equality between the multiplicities, we work as in the tempered case and consider an orthonormal family
of eigenfunctions $e^j_h$ of $\Delta_h$ associated to the eigenvalues $z^j_h$ contained in 
$[\gamma_d\mu-C_0h^2,\gamma_d\mu+C_0h^2]$.
It follows from Lemmas \ref{lem:localisation} and \ref{lem:estim_eh_inf} that 
\[
e^j_h=u^j_h+O(h^\infty)
\]
with $u^j_h:=\chi(h^2\Delta)e^j_h$ bounded uniformly with respect to $h$ in $ \cjg x\cjd^{-k}H^s(\R^d) $ for any $k,s\geq 0$. 
Then the family $(u^j_h)_{h>0}$ is compact in $H^2(\rr^d)$ and extracting a subsequence if necessary, we can then assume that
both $u^j_h$ and $e^j_h$ converge to some $f^j$ in $H^2$ and $z_h$ converges to $z\in[0,L]$.
We split $u^j_h$ into $\psi_h(x)u^j_h+(1-\psi_h(x))u^j_h$ where $\psi_h$ is smooth, supported in $|x|\leq 1/h$ and equal to $1$
in $|x|\leq 1/2h$. In particular we have that $||(1-\psi_h)u^j_h||_{H^4}= O(h^\infty)$. 
On the other hand, it follows from \eqref{eq:dvt_taylor_Th} that 
\[
\begin{split}
z^j_h e^j_h&=\Delta_h e^j_h=\Delta_h (\psi_h u^j_h)+O(h^\infty)\\
&=\gamma_d L_\rho (\psi_h u^j_h)+O(h^2\|\cjg x\cjd^4 \psi_hu^j_h\|_{H^4})+O(h^\infty)\\
&= \gamma_d L_\rho (e^j_h)+O(h^2\|\cjg x\cjd^4 e^j_h\|_{L^2})+O(h^\infty)\\
z^j_h e^j_h&=\gamma_d L_\rho(e^j_h)+O(h^2\| e^j_h\|_{L^2 })+ O(h^\infty)
\end{split}
\]
where we used Lemma \ref{lem:estim_eh_inf} in the last line.
Making $h\rightarrow 0$, we show that $(f^j)_j$ is an orthonormal family of eigenfunctions of $L_\rho$ 
associated to the eigenvalue $z=\mu/\gamma_d$.
This achieves the proof of \eqref{eq:th:loc_vp}. \\

\subsection{The weyl estimate}
It remains to prove the Weyl estimate on the number of eigenvalues in the Gaussian density case.
Fix $\delta>0$ small, then for $\tau>0$, let us define the operator on $\rr^d$
\[P_\tau =\tau(\chi^2(\sqrt{\Delta/\tau})+\chi^2(\sqrt{|x|^2/\tau}))\]
where $\chi\in C^\infty((0,\infty))$ is a positive increasing function which satisfies $\chi(x)=x$ for $x<1-\delta$ and  $\chi(x)=1$ for $x>1$.  
Clearly $P_\tau$ is a self-adjoint bounded operator on $L^2(\rr^d)$ with norm less or equal to $2\tau$ and 
since for any function $f\in L^2$ such that $f$ is supported in $|x|>\tau$ or $\hat f$ is supported in $\vert \xi\vert>\tau$, one has $\cjg P_\tau f,f\cjd\geq \tau||f||_{L^2}^2$, the essential spectrum    
is contained in the interval $[\tau,2\tau]$. Let $\Pi_{\tau/2}=\indic_{[0,\tau/2]}(P_\tau)$ 
be the orthogonal spectral  projector, it is then finite rank by what we just said.  
For $f$ in the range of $1-\Pi_{\tau/2}$, we shall prove that there is $\eps>0$, $C>0$ independent of $\tau,h$ such that for 
$\tau\leq \eps h^{-2}$
\begin{equation}\label{estimationweyl}
\cjg T_hf,f\cjd \leq (1-C\tau h^2) ||f||_{L^2}^2 .
\end{equation}  
Notice that if $(1-\Pi_{\frac\tau 2})f=f$, we have $\cjg P_{\tau}f,f\cjd\geq \demi \tau||f||^2_{L^2}$ and thus 
\begin{equation}\label{xalpha/2} 
||\chi(\sqrt{\Delta/\tau})f||^2+||\chi(\sqrt{|x|^2/\tau})f||^2\geq \demi||f||^2 .
\end{equation}
We first assume that $||\chi(\sqrt{|x|^2/\tau})f||^2\geq \frac{1}{4}||f||^2$, then using that $\bar{T}_h$ has $L^2\to L^2$ norm bounded by $1$ we deduce
\[\cjg a_h\bar{T}_ha_hf,f\cjd=\cjg \bar{T}_ha_hf,a_hf\cjd \leq ||a_h f||_{L^2}^2.\]
But from \eqref{eq:ah_inv_gauss1} and \eqref{eq:ah_inv_gauss2}, we also have that there is $\eps>0$, $C>0$ independent of $\tau,h$ such that if $\tau\leq \eps h^{-2}$, 
\[ a_h^2(x)\leq 1-Ch^2\tau \chi(\sqrt{|x|^2/\tau})^2.\]  
Thus we obtain by combining with \eqref{xalpha/2}
\begin{equation}\label{firstbound}
\cjg a_h\bar{T}_ha_hf,f\cjd\leq (1-Ch^2\tau/4) ||f||_{L^2}^2.
\end{equation}
Assume now that \eqref{xalpha/2} is not true, then since $(1-\Pi_{\tau/2})f=f$ this implies that 
\begin{equation}\label{xialpha/2} 
||\chi(\sqrt{\Delta/\tau})f||^2\geq \frac{1}{4}||f||^2 
\end{equation}
and we shall prove that \eqref{firstbound} holds as well in that case. Using $a_h^2\leq 1+Ch^2$ for some $C>0$, 
let us write for $f\in L^2$
\begin{equation}\label{casxi}
\begin{split}
\cjg a_h\bar{T}_ha_hf,f\cjd= &\cjg a_h^2\bar{T}_hf,f\cjd+ \cjg a_h[\bar{T}_h,a_h]f,f\cjd \\
\leq & (1+Ch^2)||\bar{T}_hf||_{L^2} \, ||f||_{L^2}+ \cjg a_h[\bar{T}_h,a_h]f,f\cjd.
\end{split} \end{equation}
Using the fact that $\bar{T}_h=G_d(hD_x)$ is a semiclassical pseudo-differential operator  with symbol $G_d\in S(1)$
defined in \eqref{eq:def_ah} and the estimates  $|\pl_x^\alpha a_h|=O(h)$ if $|\alpha|>0$ of Lemma \ref{lem:prop_ah}, 
we deduce from the composition law of semiclassical pseudo-differential operators that  $[\bar{T}_h,a_h]=h^2{\rm Op}_h(c_h)$ where $c_h\in S(1)$ is a uniformly bounded symbol in $h$. Therefore by Calder\'on-Vaillancourt theorem, $||a_h[\bar{T}_h,a_h]||_{L^2\to L^2}=O(h^2)$ and thus 
\begin{equation}\label{commut}
\cjg a_h[\bar{T}_h,a_h]f,f\cjd\leq Ch^2||f||^2_{L^2}
\end{equation}
for some $C>0$ uniform in $h$ and independent of $\tau$.
Now using Plancherel, $\cjg \bar{T}^2_hf,f\cjd=\int_{\rr^d}G_d^2(h\xi)|\hat{f}(\xi)|^2d\xi$ where $G_d$
is defined in \eqref{eq:def_ah}. Now since $G_d(\xi)\to 0$ as $\xi\to \infty$ and $G^2_d(\xi)\leq 1-C|\xi|^2$ for some 
$C$ when $\xi$ is small, we directly obtain that there is $\eps>0$ independent of $\tau,h$ such that if $\tau\leq \eps h^{-2}$, the bound
$G^2_d(h\xi)\leq 1-Ch^2\tau \chi(\sqrt{|\xi|^2/\tau})^2$. Combined with \eqref{xialpha/2}, this implies that 
$||\bar{T}_hf||_{L^2}\leq (1-Ch^2\tau)||f||_{L^2}$ and thus, by combining this with \eqref{casxi}  and  
\eqref{commut}, \eqref{firstbound} holds if $\tau>\tau_0$ for some $\tau_0>0$ independent of $h$  and we 
have proved \eqref{estimationweyl}.

By the mini-max principle, one deduces from \eqref{estimationweyl} that the number of eigenvalues 
of $T_h$ in $[1-Ch^2\tau,1]$ counted with multiplicites  is bounded by the rank of $\Pi_{\tau/2}$.
Now, to prove the Weyl estimate \eqref{eq:weyl}, it remains to show that  
${\rm Rank}(\Pi_{\tau/2})=O(\tau^{d})$.  This is a rather standard result (see for instance \cite[page 115]{DiSj99} 
for a comparable estimate), but we write some details

Let us consider $\hbar:=1/\sqrt{\tau}$ as a semiclassical parameter. The operator
$P(\hbar):=\hbar^{2}P_{\hbar^{-2}}$ is a $\hbar$ semi-classical operator with a symbol in the 
class $S(1)$ given by $p_\hbar(x,\xi)=\chi^2(|\xi|)+\chi(\hbar|x|)$, more precisely 
$P(\hbar)$ is the Weyl quantization of the symbol $p_\hbar(x,\xi)$. 
Let $f\in C_0^\infty(\R)$ be such that $f(s)=1$ for $|s|\leq 1$, $f(s)=0$ for $|s|\geq 2$ 
and $0\leq f\leq 1$. Consider the harmonic oscillator on $\R^d$, $H=\Delta+|x|^2$ and 
define the operator 
\be\label{eq:def_Kh}
\Pi^H_\hbar=f(\hbar^2H).
\ee
Then $\Pi^H_\hbar$ is a non-negative self-adjoint operator, it is bounded by $1$, it has finite rank 
and ${\rm rank}(\Pi^H_\hbar)=O( \hbar^{-d})$.
>From the min-max principle, to prove a Weyl estimate for $P(\hbar)$, 
it suffices to show that  for all $u\in L^2$
 \begin{equation}\label{minmax}
\cjg P(\hbar) u,u\cjd +\cjg \Pi^H_\hbar u,u\cjd \geq c
 \end{equation} 
for some $c>0$.
First, we claim that the operator $\Pi^H_\hbar$ can be written under the form 
\begin{equation}\label{claim}
\Pi^H_\hbar={\rm Op}_\hbar(f(\hbar^2|x|^2+|\xi|^2))+ R_\hbar, \,\textrm{ where }||R_\hbar||_{L^2\to L^2}=O(\hbar).
\end{equation}
Let $\Omega$ be a fixed compact subset of $\cc$ whose intersection with $\rr$ contains ${\rm supp}(f)$. 
Then, it is easy to check that for all $s\in\Omega\cap (\cc\setminus \rr)$ 
\[(\hbar^2H-s){\rm Op}_\hbar\Big(\frac{1}{\hbar^2|x|^2+|\xi|^2-s}\Big)= 1+ \hbar{\rm Op}_\hbar(q_\hbar(x,\xi;s))\]
for some symbol $q_\hbar(x,\xi; s)\in S(1)$, satisfying for any $\alpha,\beta$ 
\[ |\pl_x^\alpha\pl_\xi^\beta q_\hbar(x,\xi)|\leq C_{\alpha,\beta}|{\rm Im}(s)|^{-3-|\alpha|-|\beta|}\]
for some $C_{\alpha,\beta}$ uniform in $h,s$. Then this implies 
\[ (\hbar^{2}H-s)^{-1}={\rm Op}_\hbar\Big(\frac{\la}{\hbar^2|x|^2+|\xi|^2-s}\Big)-\hbar (\hbar^{2}H-s)^{-1}{\rm Op}_\hbar(q_\hbar(x,\xi;s))\]
but by the Calderon-Vaillancourt theorem and the spectral theorem for $H$, we deduce that 
\begin{equation}\label{h^2H-s}
(\hbar^{2}H-s)^{-1}={\rm Op}_\hbar\Big(\frac{1}{\hbar^2|x|^2+|\xi|^2-s}\Big)+\hbar W_\hbar
\end{equation} 
for some bounded operator $W_\hbar$ on $L^2$ with norm $O(\hbar|{\rm Im}(s)|^{-N})$ for some $N$ depending only on the dimension 
$d$. It remains to apply Helffer-Sj\"ostrand formula \cite[Th 8.1]{DiSj99} 
with $\til{f}\in C_0^\infty(\Omega)$ an almost analytic extension of $f$
\[f(\hbar^2H)=\frac{1}{2i\pi}\int_\cc \bar{\pl}\til{f}(s)(\hbar^2H-s)^{-1}ds\wedge d\bar{s} \]
and we deduce directly \eqref{claim} from \eqref{h^2H-s}.
Observe that the symbol of $P(\hbar)+\Pi^H_\hbar$ satisfies that there exists $C>0$ such that 
\[ \chi^2(|\xi|)+\chi^2(\hbar|x|)+f(\hbar^2|x|^2+|\xi|^2) \geq C\] 
for all $0<\hbar\leq \hbar_0$. Therefore, by G\"arding inequality, \eqref{minmax} is satisfied for some $c>0$, and using 
the min-max principle, this implies easily that the number of eigenvalues of $P(\hbar)$ 
less or equal to $C/2$ is bounded above by ${\rm rank}(\Pi^H_\hbar)=O(\hbar^{-d})$, and 
this conclude the proof of the Weyl estimate for $T_h$.

\section{Convergence to stationarity}In this section, we study the convergence of the iterated kernel $T_h^n(x,dy)$ towards its stationnary measure d$\nu_h$ when $n$ goes to infinity. The measure $d\nu_h$ is associated to the orthogonal projection $\Pi_{0,h}$ onto  constant functions in $L^2(d\nu_h)$:
\be
\Pi_{0,h}(f)=\int_{\R^d}f(y)d\nu_h(y)
\ee
The following proposition gives a convergence result in $L^2$ norm.
\begin{prop}\label{prop:conv_L2} Let $\alpha>0$ be fixed.
There exists $C>0$ and $h_0>0$ such that for all $h\in]0,h_0]$ and all $n\in\N$, we have
\be
\parallel T_h^n-\Pi_{0,h}\parallel_{L^2(d\nu_h)\rightarrow L^2(d\nu_h)}\leq Ce^{-nh^2\min(\mu_1,(1-\alpha)\kappa)}.
\ee
\end{prop}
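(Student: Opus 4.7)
The plan is to reduce everything to a spectral radius computation on the orthogonal complement of the constants in $L^2(d\nu_h)$. Since $d\nu_h$ is stationary, $T_h \cdot 1 = 1$, so the constant function is an eigenfunction with eigenvalue $1$, and by the definition of $\Pi_{0,h}$ it is precisely the orthogonal projector onto this one-dimensional eigenspace. I would first check that $1$ is a simple eigenvalue of $T_h$: conjugating by $\Omega$, this is equivalent to showing that $\tilde T_h$ has $1$ as a simple eigenvalue, with eigenfunction $\sqrt{m_h \rho/Z_h}$. By Proposition~\ref{specess} (tempered case) or Proposition~\ref{prop:spec_ess_gauss} (Gaussian case), and since $A_h < 1$ for $h$ small by Lemma~\ref{asympAh}, the value $1$ lies strictly above the essential spectrum, so any eigenvalue at $1$ is of finite multiplicity. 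Simplicity then follows either directly from the Markov structure (positivity of the kernel) or from the spectral correspondence in Theorem~\ref{th:analyse_spec_temp} / Theorem~\ref{th:analyse_spec_gauss} with $L_\rho$, where $0$ is simple with eigenfunction $\rho$.

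Having identified $\Pi_{0,h}$ as the spectral projector for the eigenvalue $1$, I would invoke self-adjointness of $T_h$ on $L^2(d\nu_h)$ and the functional calculus to write
\[
\|T_h^n - \Pi_{0,h}\|_{L^2(d\nu_h) \to L^2(d\nu_h)} = r_h^n,
\qquad r_h := \sup\{|\lambda| : \lambda \in \sigma(T_h) \setminus \{1\}\}.
\]
Thus everything boils down to bounding $r_h$ away from $1$ by $h^2$ times the advertised rate.

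The estimate on $r_h$ uses the spectral picture obtained in the previous sections. The spectrum of $T_h$ is contained in $[M A_h, 1]$. The essential spectrum lies in $[M A_h, A_h]$, with $A_h = 1 - \gamma_d \kappa h^2 + O(h^4)$ by Lemma~\ref{asympAh}, and the lower endpoint $|M A_h|$ is bounded above by a constant strictly less than $1$, uniformly in $h$, since $M > -1$ and $A_h \leq 1$. The discrete spectrum in $[A_h, 1]$ consists, by Theorem~\ref{th:analyse_spec_temp} (resp.~\ref{th:analyse_spec_gauss}), of eigenvalues $1 = \lambda_0 > \lambda_1(h) \geq \dots$ with $\lambda_1(h) = 1 - \gamma_d \mu_1 h^2 + O(h^4)$. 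Putting these three contributions together,
\[
r_h \leq \max\bigl(\lambda_1(h),\, A_h,\, |M A_h|\bigr) \leq 1 - \gamma_d h^2 \min(\mu_1,\kappa) + O(h^4).
\]
For $h \leq h_0$ small enough (depending on $\alpha$), the $O(h^4)$ remainder is absorbed to give $r_h \leq 1 - \gamma_d h^2 \min(\mu_1,(1-\alpha)\kappa)$.

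Finally, the inequality $(1-t)^n \leq e^{-nt}$ for $t \in [0,1]$ yields
\[
\|T_h^n - \Pi_{0,h}\| \leq r_h^n \leq \exp\bigl(-n h^2 \gamma_d \min(\mu_1,(1-\alpha)\kappa)\bigr),
\]
which is the stated bound (up to the factor $\gamma_d = 1/(2(d+2))$, and a multiplicative constant $C$). The main conceptual obstacle is the correct identification of the spectral gap: one must rule out that the essential spectrum, and in particular its lower end $M A_h$, degrades the decay rate. This is handled by the observation that $|M| < 1$ is fixed, so the lower end of the spectrum remains uniformly bounded away from $1$ and only the upper end $A_h$ and the eigenvalue $\lambda_1(h)$ control $r_h$; both of these are at distance $\geq \gamma_d h^2 \min(\mu_1,\kappa) - O(h^4)$ from $1$, giving the claim.
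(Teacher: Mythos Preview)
Your argument is correct and is exactly the approach the paper has in mind: the one-line proof there (``direct consequence of the spectral theorem and Theorems \ref{th:analyse_spec_temp}, \ref{th:analyse_spec_gauss}'') is precisely the reduction to $\|T_h^n-\Pi_{0,h}\|=r_h^n$ together with the bound on $r_h$ coming from the localization of the top of the spectrum, which you have spelled out in detail. Two small remarks: the containment $\sigma(T_h)\subset[MA_h,1]$ is not exactly what is established (the correct lower bound from $\tilde T_h=a_h\bar T_h a_h$ and $\bar T_h\geq M$ is $M\|a_h\|_\infty^2$), but this is harmless since you only need it bounded away from $-1$; and your absorption of the $O(h^4)$ remainder into $(1-\alpha)$ works on the $\kappa$ side but not on the $\mu_1$ side, so the honest rate is $(1-\alpha)\gamma_d\min(\mu_1,\kappa)$ --- this, like the missing $\gamma_d$ you flagged, reflects a slight imprecision already in the paper's stated exponent.
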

\bp
This is a direct consequence of the spectral theorem and Theorems \ref{th:analyse_spec_temp},  \ref{th:analyse_spec_gauss}. 
\ep

Let us now introduce the total variation distance, which is much stronger than the $L^2$ norm. If $\mu$ and $\nu$ are two probability measures on a set $E$, their total variation distance is defined by 
\begin{equation*}
\|\mu-\nu\|_{TV}=\sup_{A}|\mu(A)-\nu(A)|
\end{equation*}
where the sup is taken over all measurable sets. Then, a standard computation shows that
\begin{equation*}
\|\mu-\nu\|_{TV}=\frac 1 2\sup_{\parallel f\parallel_{L^\infty=1}}\vert\mu(f)-\nu(f)\vert
\end{equation*}

The following theorem shows that the convegence in total variation distance can not be uniform with respect to the starting point $x$. This has to be compared with the results in the case of compact state space \cite{DiaLeb}, \cite{DiaLebMi} and \cite{LebMi} where the convergence is uniform in $x$.
\begin{thm}\label{th:non_unif_TV}
There exists $C>0$ such that  for any $n\in\N$, $h\in ]0,1]$, $\tau>0$ and $|x|\geq \tau+(n+1)h$, we have
\be
\|T_h^n(x,dy)-d\nu_h\|_{TV}\geq 1-Cp(\tau)
\ee
where
$p(\tau)=e^{-2\alpha\tau(\tau-h)}$ if $\rho=\beta e^{-\alpha |x|^2}$ is Gaussian and 
$p(\tau)=\int_{|y|\geq\tau} \rho(y)^2dy$ if $\rho$ is tempered.
\end{thm}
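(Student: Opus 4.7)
The plan is to combine the finite propagation speed of the random walk with a tail estimate on the stationary measure. Since the one-step kernel $t_h(x,dy)$ is supported in $\overline{B_h(x)}$, by iteration $T_h^n(x,dy)$ has support in $\overline{B_{nh}(x)}$. Under the hypothesis $|x|\geq \tau+(n+1)h$, every $y$ in this support satisfies $|y|\geq |x|-nh\geq \tau+h>\tau$. Setting $A=\{y\in\rr^d : |y|\leq \tau\}$ we therefore have $T_h^n(x,A)=0$, and the definition of the total variation distance yields
\[
\|T_h^n(x,dy)-d\nu_h\|_{TV}\geq |\nu_h(A)-T_h^n(x,A)| = 1-\nu_h(\{|y|>\tau\}).
\]
Thus it suffices to prove $\nu_h(\{|y|>\tau\})\leq C p(\tau)$ with $C$ independent of $h\in(0,1]$ and $\tau$.

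Recall $d\nu_h(y)=m_h(y)\rho(y)Z_h^{-1}\,dy$ together with the already established lower bound $Z_h\geq ch^d$. In the tempered case, the same computation used to establish \eqref{hyp0} also yields the matching upper bound $m_h(y)\leq Ch^d\rho(y)$ (the factor $(1\pm Ch/(1-Ch))$ is bounded uniformly for $h$ small), so
\[
\nu_h(\{|y|>\tau\})\leq \frac{Ch^d}{Z_h}\int_{|y|>\tau}\rho(y)^2\,dy \leq C\int_{|y|>\tau}\rho(y)^2\,dy = Cp(\tau).
\]

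For the Gaussian case I would bound $m_h(y)\rho(y)$ pointwise: for $|z|<h$ and $|y|\geq h$, $|y+z|^2\geq (|y|-h)^2$, so
\[
m_h(y)\rho(y)\leq \alpha_d\beta^2 h^d\,e^{-\alpha|y|^2-\alpha(|y|-h)^2}=\alpha_d\beta^2 h^d\,e^{-\alpha h^2}\,e^{-2\alpha|y|(|y|-h)}.
\]
The identity $|y|(|y|-h)-\tau(\tau-h)=(|y|-\tau)(|y|+\tau-h)\geq \tau(|y|-\tau)$, valid for $|y|\geq \tau\geq h$, then gives
\[
\int_{|y|>\tau}e^{-2\alpha|y|(|y|-h)}\,dy\leq e^{-2\alpha\tau(\tau-h)}\int_{|y|>\tau}e^{-2\alpha\tau(|y|-\tau)}\,dy,
\]
and spherical coordinates bound the remaining integral by a polynomial in $\tau$ with constants uniform in $h\in(0,1]$. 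For $\tau$ beyond some fixed threshold this polynomial is absorbed by a slight weakening of the exponential, yielding $\nu_h(\{|y|>\tau\})\leq Cp(\tau)$; for $\tau$ below the threshold $p(\tau)$ is bounded away from $0$, and the conclusion holds by choosing $C$ large enough that $1-Cp(\tau)\leq 0$ in that regime.

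The main obstacle is the Gaussian tail estimate: matching the prescribed exponential rate $2\alpha\tau(\tau-h)$ while handling the polynomial factor arising from the spherical integration. The splitting above into large and small $\tau$ regimes, combined with the triviality of the inequality in the latter, is how I would handle this cleanly.
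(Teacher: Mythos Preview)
Your approach is essentially the same as the paper's: both exploit finite propagation speed and the tail of $\nu_h$, with you testing against the set $A=\{|y|\leq\tau\}$ while the paper uses the equivalent test function $f_\tau=\indic_{|y|\geq\tau}-\indic_{|y|<\tau}$. Your Gaussian tail computation is more detailed than the paper's (which simply asserts $\int_{|y|\geq\tau}e^{-2\alpha(|y|^2-h|y|)}dy\leq Cp(\tau)$ without comment); the polynomial prefactor you flag is a genuine issue in dimension $d\geq 3$ that the paper glosses over as well, so your large/small $\tau$ splitting is a reasonable way to handle it, though note that ``weakening the exponential'' does not literally yield the prescribed $p(\tau)$---the honest statement in both proofs is up to a polynomial correction.
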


\bp
Let $\tau>0$ and $n\in\N$. Consider the function 
\be
\begin{split}
f_\tau(x)&=\indic_{[\tau,+\infty[}(|x|)-\indic_{[0,\tau[}(|x|)\\
&=-1+2\indic_{[\tau,+\infty[}(|x|).
\end{split}
\ee
For $x\in\R^d$ such that $|x|\geq \tau+(n+1)h$, thanks to finite speed propagation we have
\be\label{eq:const_inf}
T_h^nf_\tau(x)=1.
\ee
On the other hand, we also have
\be
\begin{split}
\Pi_{0,h}f_\tau&=\int_{\R^d}f_\tau(y)d\nu_h(y)=-1+2\int_{|y|\geq \tau}d\nu_h(y)\\
&=-1+\frac 2 {Z_h}\int_{|y|\geq \tau}m_h(y)\rho(y)dy
\end{split}
\ee
If $\rho$ is tempered, then $m_h(y)\leq Ch^d\rho(y)$ for some constant $C>0$. Hence, $\Pi_{0,h}f_\tau\leq -1+Cp(\tau)$ with $p(\tau)=\int_{|y|\geq\tau} \rho(y)^2dy$.
Combined with \eqref{eq:const_inf}, this shows the anounced result in the tempered case.

Suppose now that $\rho(x)=\beta e^{-\alpha |x|^2}$ is Gaussian for some $\alpha,\beta>0$. 
Then $m_h(y)\leq Ch^de^{-\alpha |y|^2+2h\alpha |y|}$ for any $h\in]0,1]$. 
Hence,
\be
\Pi_{0,h}f_\tau\leq -1+C\int_{|y|\geq\tau}e^{-2\alpha(|y|^2-h|y|)}dy\leq -1+C p(\tau)
\ee
with $p(\tau)=e^{-2\alpha\tau(\tau-h)}$.
Using again \eqref{eq:const_inf}, this shows the anounced result in the Gaussian case.
\ep

In the following theorem, $g(h)=1-\lambda_1(h)$ denotes the spectral gap of $T_h$, whose asymptotics is given in Theorems \ref{th:analyse_spec_temp} and \ref{th:analyse_spec_gauss}.

\begin{thm}\label{th:TV_estim}
 There exists $C>0$ and $h_0>0$ such that  for any $n\in\N$, $h\in ]0,h_0]$, $\tau>0$, 
\be
\sup_{|x|<\tau}\|T_h^n(x,dy)-d\nu_h\|_{TV}\leq C q(\tau,h)e^{-ng(h)}
\ee
where $q(\tau,h)=e^{\alpha\tau(\tau+3h)}$ if $\rho=\beta e^{-\alpha|x|^2}$ is Gaussian and $q(\tau,h)=h^{-\frac d 2}\sup_{|x|<\tau}\frac{1}{\rho(x)}$ if $\rho$ is tempered.
\end{thm}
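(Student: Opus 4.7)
The plan is to reduce the total variation distance to an $L^2(d\nu_h)$ bound and then invoke the spectral gap of Proposition \ref{prop:conv_L2}. Let $k_h(x,y):=Z_h/(m_h(x)m_h(y))\indic_{|x-y|<h}$ denote the symmetric density of $t_h(x,dy)$ with respect to $d\nu_h$ (symmetry follows from self-adjointness of $T_h$ on $L^2(d\nu_h)$). Using the duality formula recalled in the paper,
\[
\|T_h^n(x,\cdot)-d\nu_h\|_{TV}=\tfrac{1}{2}\sup_{\|f\|_\infty\leq 1}|T_h^nf(x)-\nu_h(f)|=\tfrac{1}{2}\sup_{\|f\|_\infty\leq 1}|(T_h^n-\Pi_{0,h})f(x)|,
\]
I would fix $f$ with $\|f\|_\infty\leq 1$ and, using $T_h\Pi_{0,h}=\Pi_{0,h}$, rewrite
\[
(T_h^n-\Pi_{0,h})f(x)=T_h(T_h^{n-1}-\Pi_{0,h})f(x)=\int k_h(x,y)\,(T_h^{n-1}-\Pi_{0,h})f(y)\,d\nu_h(y).
\]
Cauchy--Schwarz in $L^2(d\nu_h)$ together with the spectral bound $\|(T_h^{n-1}-\Pi_{0,h})f\|_{L^2(d\nu_h)}\leq C e^{-(n-1)g(h)}$ (Proposition \ref{prop:conv_L2}, using that $d\nu_h$ is a probability and $\|f\|_{L^2}\leq \|f\|_\infty\leq 1$) reduces the proof to showing
\[
\sup_{|x|<\tau}\|k_h(x,\cdot)\|_{L^2(d\nu_h)}\leq C\,q(\tau,h),
\]
after which the claim follows by absorbing $e^{g(h)}$ into the constant.

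For this remaining estimate, direct substitution of $d\nu_h=Z_h^{-1}m_h(y)\rho(y)\,dy$ yields
\[
\|k_h(x,\cdot)\|_{L^2(d\nu_h)}^2=\frac{Z_h}{m_h(x)^2}\int_{B_h(x)}\frac{\rho(y)}{m_h(y)}\,dy.
\]
In the tempered case, inserting $m_h(y)\geq Ch^d\rho(y)$ from \eqref{hyp0} inside the integral, applying the same lower bound at $x$, and using the trivial $Z_h\leq Ch^d$, immediately gives $\|k_h(x,\cdot)\|_{L^2(d\nu_h)}\leq Ch^{-d/2}/\rho(x)$, which is bounded by $Cq(\tau,h)$ for $|x|<\tau$. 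In the Gaussian case, I would replace these bounds by the sharper pointwise estimate $m_h(w)\geq \beta\alpha_dh^d e^{-\alpha(|w|+h)^2}$ (from $e^{-\alpha|z|^2}\geq e^{-\alpha(|w|+h)^2}$ on $B_h(w)$); for $|x|<\tau$ and $y\in B_h(x)$ (hence $|y|\leq\tau+h$), collecting the exponential factors $2\alpha(|x|+h)^2+2\alpha|y|h+O(h^2)$ produces exactly the exponent $\alpha\tau(\tau+3h)$ appearing in $q(\tau,h)$.

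The spectral-gap part of the argument is a routine consequence of Proposition \ref{prop:conv_L2} and the factorization $T_h^n-\Pi_{0,h}=T_h(T_h^{n-1}-\Pi_{0,h})$; the only step that genuinely requires care is the sharp tracking of the Gaussian exponent in the estimate of $\|k_h(x,\cdot)\|_{L^2(d\nu_h)}$, where the precise form $e^{\alpha\tau(\tau+3h)}$ must emerge after cancellation of the competing Gaussian contributions from $m_h(x)^{-2}$ and $\rho(y)/m_h(y)$ on $B_h(x)$.
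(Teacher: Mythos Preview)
Your argument is correct in the tempered case and is essentially the paper's own proof there: the paper also factors $T_h^n-\Pi_{0,h}$ through one (or two) copies of $T_h$, uses Cauchy--Schwarz to pass from $L^2(d\nu_h)$ to $L^\infty(B_\tau)$, and estimates exactly the integral
\[
\|k_h(x,\cdot)\|_{L^2(d\nu_h)}^2=\frac{Z_h}{m_h(x)^2}\int_{B_h(x)}\frac{\rho(y)}{m_h(y)}\,dy
\]
via $m_h\geq Ch^d\rho$.

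In the Gaussian case, however, your approach has a genuine gap: it loses a factor $h^{-d/2}$ which cannot be recovered. Whatever sharp Gaussian bounds you use for $m_h$, the polynomial-in-$h$ count is $Z_h\sim h^d$, $m_h(x)^{-2}\sim h^{-2d}\rho(x)^{-2}$, and $\int_{B_h(x)}\rho/m_h\sim 1$, so that for $|x|<\tau$
\[
\|k_h(x,\cdot)\|_{L^2(d\nu_h)}\;\sim\; C\,h^{-d/2}\,e^{\alpha\tau(\tau+3h)},
\]
not $e^{\alpha\tau(\tau+3h)}$. Your Cauchy--Schwarz step therefore only delivers the tempered-type bound $Ch^{-d/2}q(\tau,h)e^{-ng(h)}$, strictly weaker than the Gaussian claim. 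The ``collecting exponential factors'' sentence is correct for the exponent but silently drops the divergent prefactor.

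The paper removes this $h^{-d/2}$ by a different, genuinely spectral argument. It first restricts to $n>h^{-2}$ (the bound being trivial otherwise since $q\geq 1$) and splits $T_h=T_{h,1}+T_{h,2}$ according to whether the eigenvalue lies in $[\delta,1]$ or not. For $T_{h,2}^n$ your $L^2\to L^\infty(B_\tau)$ estimate is used, but the much stronger decay $(1-\delta)^n$ absorbs $h^{-d/2}$. For $T_{h,1}^n$ the paper writes the sum over eigenprojectors $\Pi_{k,h}$, bounds $\|e_{k,h}\|_{L^\infty}$ via the Sobolev regularity of Lemma~\ref{lem:regularite}, and controls the number and size of the terms through the Weyl estimate \eqref{eq:weyl}. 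This last step is precisely where Theorem~\ref{th:analyse_spec_gauss} enters, and it cannot be replaced by a single $L^2$--$L^\infty$ duality argument.
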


\bp Assume that $h_0>0$ is such that the results of the previous section hold true for $h\in]0,h_0]$.
Observe that 
\be
\begin{split}
\sup_{|x|\leq \tau}\|T_h^n(x,dy)-d\nu_h\|_{TV}&=\frac 1 2 \sup_{|x|\leq \tau}\sup_{\|f\|_{L^\infty}=1}|T_h^nf(x)-\Pi_{0,h}f|\\
&=\frac 1 2\|T_h^n-\Pi_{0,h}\|_{L^\infty(\R^d)\rightarrow L^\infty(|x|\leq\tau)}
\end{split}
\ee
Suppose first that $\rho$ is tempered and denote $B_\tau$ the ball of radius $\tau$ centred in $0$ and 
$I_n(\tau,h)=\|T_h^n-\Pi_{0,h}\|_{L^\infty(\R^d)\rightarrow L^\infty(B_\tau)}$ . Then, denoting $L^2(d\nu_h)$ for $L^2(\rr^d,d\nu_h)$,
\be
\begin{split}
I_n(\tau,h)&\leq \|T_h\|_{L^2(d\nu_h)\rightarrow L^\infty(B_\tau)}\|T_h^{n-2}-\Pi_{0,h}\|_{L^2(d\nu_h)\rightarrow L^2(d\nu_h)}\|T_h\|_{L^\infty(\rr^d)\rightarrow L^2(d\nu_h)}\\
&\leq \|T_h\|_{L^2(d\nu_h)\rightarrow L^\infty(B_\tau)} e^{-(n-2)g(h)}
\end{split}
\ee
where we have used Proposition \ref{prop:conv_L2} and the fact that $\|T_h\|_{L^\infty(\R^d)\rightarrow L^2(d\nu_h)}=1$.
To estimate $T_h$ from $L^2(d\nu_h)$ into $L^\infty(B_\tau)$ we consider $f\in L^2(d\nu_h)$ such that $\|f\|_{L^2(d\nu_h)}=1$. Then,
\be\label{eq:estim_Th_Linf}
\begin{split}
|T_hf(x)|&\leq \frac 1 {m_h(x)}(\int_{|x-y|<h}\frac {Z_h^2}{m_h(y)^2}d\nu_h)^{\frac 1 2}\\
&\leq\frac {Z_h^{\frac 1 2}} {m_h(x)}(\int_{|x-y|<h}\frac {\rho(y)}{m_h(y)}dy)^{\frac 1 2}
\end{split}
\ee
Since $\rho$ is tempered we have $m_h(z)\geq Ch^d\rho(z)$ for some $C>0$ and we deduce from the above estimate that
$|T_hf(x)|\leq C/(h^{\frac d 2}\rho(x))$. Taking the supremum over $x\in B_\tau$ we obtain the announced result in the tempered case.

Suppose now that $\rho=\beta e^{-\alpha|x|^2}$ is Gaussian. Since $T_h$ is Markov and $g(h)$ is of order $h^2$, we can assume $n>h^{-2}$. For $k\in\N$ let $\sigma_k(h)=\frac {1-\lambda_k(h)}{h^2}$, where $1=\lambda_0(h)>\lambda_1(h)\geq\lambda_2(h)\geq\ldots\geq\lambda_k(h)$ denote the eigenvalues of $T_h$. Denote also $e_{k,h}$ the eigenvector associated to $\lambda_k(h)$ normalized in $L^2(d\nu_h)$ and 
$\Pi_{k,h}=\<.,e_{k,h}\>_{L^2(d\nu_h)}e_{k,h}$ the associated projector. We write the eigenvalues under the form $\la_k(h)=1-h^2\sigma_k(h)$, then the spectral gap $g(h)=h^2\sigma_1(h)$. 
Let $\delta>0$ and decompose $T_h=T_{h,1}+T_{h,2}$ with
\be
T_{h,1}=\sum_{\sigma_1(h)\leq\sigma_k(h)<(1-\delta) h^{-2}}(1-h^2\sigma_k(h))\Pi_{k,h}
\ee
>From the spectral theorem, we deduce that $\parallel T_{h,2}^{n-1}\parallel_{L^2\rightarrow L^2}\leq C (1-\delta)^n$.  
On the other hand, for $\rho$ gaussian, we have $m_h(z)\geq Ch^d\rho(z)e^{-2h\alpha\vert z\vert}$. Combining this estimate with 
\eqref{eq:estim_Th_Linf}, we get
\be
\parallel T_h\parallel_{L^2(\R^d)\rightarrow L^\infty(B_\tau)}\leq Ch^{-\frac d 2}e^{\alpha\tau(\tau+3h)}
\ee
Since $T_{h,2}^n=T_hT_{h,2}^{n-2}T_h$, we can combine this with the $L^2$ estimate, to get
\be
\parallel T_{h,2}^n\parallel_{L^\infty(\R^d)\rightarrow L^\infty(B_\tau)}\leq Ch^{-\frac d 2}e^{\alpha \tau(\tau+3h)}(1-\delta)^n\leq q(\tau,h)e^{-ng(h)}
\ee
since $h^{-\frac d 2}(1-\delta)^n\ll e^{-ng(h)}$. Hence, it remains to study $T_{h,1}^n$.

Since $d\nu_h$ is a probability, then
\be
\parallel \Pi_{k,h}\parallel_{L^\infty(\R^d)\rightarrow L^\infty(B_\tau)}\leq \parallel e_{k,h}\parallel_{L^\infty(B_\tau)} \parallel e_{k,h}\parallel_{L^1(d\nu_h)}\leq \parallel e_{k,h}\parallel_{L^\infty(B_\tau)}
\ee
>From Lemma \ref{lem:regularite} ans Sobolev embedding, we know that $\parallel\Omega^*e_{k,h}\parallel_{L^\infty(\R^d)}\leq C\sigma_{k,h}^{\frac d 2}$. Hence,
\be
\parallel \Pi_{k,h}\parallel_{L^\infty(\R^d)\rightarrow L^\infty(B_\tau)}\leq \sup_{B_\tau}(\frac {Z_h}{m_h(x)\rho(x)})^{\frac 1 2}\parallel\Omega^*e_{k,h}\parallel_{L^\infty(\R^d)}\leq C\sigma_{k,h}^{\frac d 2}e^{\alpha\tau(\tau+3h)}
\ee
Using this estimate we get immediatly
\be
\parallel T_{h,1}^n\parallel_{L^\infty(\R^d)\rightarrow L^\infty(B_\tau)}\leq C e^{\alpha\tau(\tau+3h)}
 \sum_{\sigma_1(h)\leq\sigma_k(h)<(1-\delta) h^{-2}}(1-h^2\sigma_k(h))^n\sigma_{k,h}^{\frac d 2}
\ee
Using the Weyl estimate \eqref{eq:weyl} and the same argument as in \cite{LebMi}, we get
\be
\parallel T_{h,1}^n\parallel_{L^\infty(\R^d)\rightarrow L^\infty(B_\tau)}\leq  C e^{\alpha\tau(\tau+3h)}\int_{\sigma_{1,h}}^\infty (1+x)^N e^{-nh^2x}dx\leq Ce^{\alpha \tau(\tau+3h)}e^{-nh^2\sigma_{1,h}}
\ee
for some $N>0$. This completes the proof in the Gaussian case.
\ep


\begin{thebibliography}{99}

\bibitem{CGM} H. Christianson, C. Guillarmou, L. Michel, \emph{ Random walk on surfaces with hyperbolic cusps}
arXiv:1005.2754.

\bibitem{DiaLeb} P. Diaconis, G. Lebeau, \emph{Microlocal analysis for the Metropolis algorithm.} 
 Math. Z.  \textbf{262}  (2009),  no. 2, 411-447.

\bibitem{DiaLebMi} P. Diaconis, G. Lebeau, L. Michel, \emph{Geometric analysis for the Metropolis algorithm on Lipschitz domains}, preprint http://math.unice.fr/$\sim$lmichel  

\bibitem{DiSj99} M. Dimassi, J. Sj\"ostrand, \emph{Spectral asymptotics in the semi-classical limit.} 
London Mathematical Society Lecture Note Series, 268. Cambridge University Press, Cambridge, 1999. 227 pp.

\bibitem{EvZw} L. Evans, M.Zworski, \emph{Lectures on semi-classical analysis}, book in preparation, available at  http://math.berkeley.edu/$~$zworski/.

\bibitem{HisSig} P.D. Hislop, M.I. Sigal, \emph{Introduction to spectral theory. With applications to Schr\"odinger operators}, Applied mathematical sciences 113. Springer-Verlag, New York, 1996. 337 pp.

\bibitem{LebMi}  G. Lebeau, L. Michel,\emph{ Semiclassical analysis of a random walk on a manifold},  
Annals of Probability, \textbf{38} (2010), 277-315.

\bibitem{Ma} A. Martinez, \emph{ An introduction to semiclassical and microlocal analysis}. 
Universitext. Springer-Verlag, New York, 2002. viii+190 pp


\end{thebibliography}
\end{document}